\newtheorem{theorem}{\sc Theorem}[section]
\newtheorem{proposition}[theorem]{\sc Proposition}
\newtheorem{notation}[theorem]{\sc Notation}
\newtheorem{lemma}[theorem]{\sc Lemma}
\newtheorem{corollary}[theorem]{\sc Corollary}
\theoremstyle{definition}
\newtheorem{definition}[theorem]{\sc Definition}
\theoremstyle{remark}
\newcommand{\thlabel}[1]{\label{th:#1}}
\newcommand{\thref}[1]{Theorem~\ref{th:#1}}
\newcommand{\prlabel}[1]{\label{pr:#1}}
\newcommand{\prref}[1]{Proposition~\ref{pr:#1}}
\newcommand{\lelabel}[1]{\label{le:#1}}
\newcommand{\leref}[1]{Lemma~\ref{le:#1}}
\newcommand{\colabel}[1]{\label{co:#1}}
\newcommand{\coref}[1]{Corollary~\ref{co:#1}}
\newcommand{\delabel}[1]{\label{de:#1}}
\newcommand{\deref}[1]{Definition~\ref{de:#1}}
\newcommand{\eqlabel}[1]{\label{eq:#1}}
\newcommand{\equref}[1]{(\ref{eq:#1})}
\begin{document}
\title{Quasi-bialgebra Structures and Torsion-free Abelian Groups}
\dedicatory{Dedicated to Toma Albu and Constantin N\u{a}st\u{a}sescu}

\author{Alessandro Ardizzoni}
\address{University of Turin, Department of Mathematics "G. Peano", via
Carlo Alberto 10, I-10123 Torino, Italy}
\email{alessandro.ardizzoni@unito.it}
\urladdr{www.unito.it/persone/alessandro.ardizzoni}

\author{Daniel Bulacu}
\address{Faculty of Mathematics and Informatics, University
of Bucharest, Str. Academiei 14, RO-010014 Bucharest 1, Romania}
\email{daniel.bulacu@fmi.unibuc.ro}
\urladdr{http://fmi.unibuc.ro/ro/departamente/matematica/bulacu\_daniel/}

\author{Claudia Menini}
\address{University of Ferrara, Department of Mathematics, Via Machiavelli
35, Ferrara, I-44121, Italy}
\email{men@unife.it}
\urladdr{http://www.unife.it/utenti/claudia.menini}

\subjclass[2010]{Primary 16W30; Secondaries 18D10; 16S34}
\thanks{This paper was written while the first and the third authors were members of GNSAGA.
The first author was partially supported by the research grant ``Progetti di
Eccellenza 2011/2012'' from the ``Fondazione Cassa di Risparmio di Padova e
Rovigo''. The second author was supported by
the strategic grant POSDRU/89/1.5/S/58852, Project
``Postdoctoral program for training scientific researchers" cofinanced
by the European Social Fund within the Sectorial Operational Program Human
Resources Development 2007 - 2013.}

\begin{abstract}
We describe all the quasi-bialgebra structures of a group algebra over a
torsion-free abelian group. They all come out to be triangular in a unique way.
Moreover, up to an isomorphism, these quasi-bialgebra
structures produce only one (braided) monoidal structure on the category of their
representations. Applying these results to the algebra of Laurent polynomials,
we recover two braided monoidal categories introduced in \cite{CG} by S.
Caenepeel and I. Goyvaerts in connection with Hom-structures (Lie algebras,
algebras, coalgebras, Hopf algebras).
\end{abstract}

\keywords{Quasi-bialgebras, Hom-category, Laurent polynomials, torsion-free abelian groups}
\maketitle

\section*{Introduction}

Let $\mathcal{C}$ be a category. In \cite[Section 1]{CG},
S. Caenepeel and I. Goyvaerts introduce the so called Hom-category
$\mathcal{H}\left( \mathcal{C}\right)$
in order to investigate Hom-structures (Lie algebras, algebras,
coalgebras, Hopf algebras) from the monoidal categorical point of view.
More exactly, if $\mathcal{C}$ is the category of modules over a commutative ring, then
$\mathcal{H}\left( \mathcal{C}\right) $ admits a symmetric monoidal structure
with respect to which (co)algebras in $\mathcal{H}\left( \mathcal{C}\right) $
coincide with Hom-(co)algebras, Hopf algebras with Hom-Hopf algebras and Lie algebras with Hom-Lie algebras,
respectively.

Now, fix a field $\Bbbk $ and denote by $\mathfrak{M}$ the category of $%
\Bbbk $-vector spaces. The category $\mathcal{H}\left( \mathfrak{M}\right) $
has objects pairs $\left( V,f_{V}\right) $ with $V\in \mathfrak{M}$ and $%
f_{V}\in \mathrm{Aut}_{\Bbbk }(V).$ A module over the polynomial ring $\Bbbk %
\left[ X\right] $ is a $\Bbbk $-vector space $V$ together with an element $%
g_{V}\in \mathrm{End}_{\Bbbk }\left( M\right) .$ In order to have $g_{V}$
invertible, as in the case of $\mathcal{H}\left( \mathfrak{M}\right) $, the ring $%
\Bbbk \left[ X\right] $ must be replaced with the algebra of Laurent
polynomials $\Bbbk \left[ X,X^{-1}\right] $ or, equivalently, with the
group algebra $\Bbbk \left[ \mathbb{Z}\right] $. These facts suggest a
connection between the category $\mathcal{H}(\mathfrak{M})$ and the category
of $\Bbbk \left[ \mathbb{Z}\right] $-modules. They are actually isomorphic, and
this will be proved in \prref{LaurentHom}.

As a matter of fact, it was proved in \cite{CG} that the category
$\mathcal{H}\left( \mathfrak{M}\right)$ has two different braided monoidal
structures, denoted by $\mathcal{H}(\mathfrak{M})$ and
$\widetilde{\mathcal{H}}(\mathfrak{M})$, respectively. This leads us to consider braided monoidal
structures on the category of $\Bbbk \left[ \mathbb{Z}\right] $-modules. We restrict ourselves
to the case when these structures are induced by the strict monoidal structure of
$\mathfrak{M}$. It comes out that in this case we have to compute the quasi-bialgebra
structures of the group algebra $\Bbbk[\mathbb{Z}]$, cf. \thref{reconstructionQuasi}.
We will do this in a wider context, by replacing $\mathbb{Z}$ with a torsion-free abelian group.
In fact, our arguments are valid for any abelian group $G$ with the property that, for any
natural number $n\in\{1,2,3\}$, the group of units of the group algebra $\Bbbk[G^n]$ is trivial
(that is, any invertible element of $\Bbbk[G^n]$ is a nonzero scalar multiple of an element
in $G^n$), where $G^n$ stands for the direct product of $n$ copies of $G$. But, if
this is the case then by \cite[Lemma 1.1]{Passman} we have that $G$ is torsion-free
(note that the exceptions listed in the Lemma have $|G|=2$ and hence they fulfill the requirement only for $n=1$).
For the other way around, if $G$ is a torsion-free abelian group then by \cite[Corollary 2.5]{Gilmer-Heitmann}, inductively,
it follows that $\Bbbk \left[ G^n\right]$ has the group of units trivial. (Note that this can be obtained also
from \cite[Lemmas 1.6, 1.7 \& 1.9(ii)]{Passman}).

Now, for a torsion-free abelian group we show that the third Harrison
cohomology group $H^3_{\rm Harr}(\Bbbk[G], \Bbbk, \mathbb{G}_m)$ is trivial
(\prref{HarrisonTorsionFree}). Moreover, any Harrison $3$-cocycle on $\Bbbk[G]$ is
uniquely determined by a pair $(h, g)$ of elements of $G$, and this allows us to
describe, up to an isomorphism, all the quasi-bialgebra structures on the group algebra
$\Bbbk[G]$. When we specialize this for the
multiplicative cyclic group $\left\langle g\right\rangle \cong
\mathbb{Z}$ we obtain that the quasi-bialgebra structures on the group algebra
$\Bbbk [\left\langle g\right\rangle]$ are, up to an isomorphism,
completely determined by triples
$(q, a, b)\in (\Bbbk\backslash\{0\})\times \mathbb{Z}\times \mathbb{Z}$, see
\thref{teo:LaurentQuasi}. Furthermore, all of them are deformations of
the ordinary bialgebra structure of $\Bbbk [\left\langle g\right\rangle]$ by an invertible
element in $\Bbbk[\left\langle g\right\rangle] \otimes \Bbbk[\left\langle g\right\rangle]$,
and so, up to an isomorphism, the category ${}_{\Bbbk[\langle g\rangle]}\mathfrak{M}$ admits a
unique (strict) monoidal structure. The same is valid for the braided situation, and this is
mostly because the ordinary bialgebra $\Bbbk[\langle g\rangle]$ has a unique quasi-triangular
(actually triangular) structure (\coref{BraidedZ}).

As we have already explained, the categories $\mathcal{H}(\mathfrak{M})$ and
${}_{\Bbbk[\langle g\rangle]}\mathfrak{M}$ are isomorphic. Consequently,
we have a one to one correspondence between the (braided) monoidal structures on
$\mathcal{H}(\mathfrak{M})$ and the (braided) monoidal structures on
${}_{\Bbbk[\langle g\rangle]}\mathfrak{M}$. In Theorem \ref{teo:mainVec} we endow
$\mathcal{H}\left( \mathfrak{M}\right)$ with the symmetric monoidal category
structures induced by those of ${}_{\Bbbk[\langle g\rangle]}\mathfrak{M}$ that
we previously computed. In general,
such a structure depends on a triple $(q, a, b)\in (\Bbbk\backslash\{0\})\times \mathbb{Z}\times \mathbb{Z}$,
and this is why we denoted it by $\mathcal{H}^{a, b}_p\left( \mathfrak{M}\right)$. We have
isomorphisms of symmetric monoidal categories,
${_{\Bbbk[\left\langle g\right\rangle]}}\mathfrak{M}\cong
{_{\Bbbk[\left\langle g\right\rangle]_{q}^{a,b}}}\mathfrak{M}\cong \mathcal{H}%
_{q}^{a,b}\left( \mathfrak{M}\right)$, see Corollary \ref{coro:Hequiv}.
Since $\mathcal{H}_{1}^{0,0}\left( \mathfrak{M}\right) $ and $%
\mathcal{H}_{1}^{1,-1}\left( \mathfrak{M}\right) $ can be identified, as symmetric
monoidal categories, with
$\mathcal{H}(\mathfrak{M})$ and $\widetilde{\mathcal{H}}(\mathfrak{M})$, respectively,
we obtain that $\mathcal{H}(\mathfrak{M})$ and $\widetilde{\mathcal{H}}(\mathfrak{M})$
are isomorphic as symmetric monoidal categories
(Proposition \ref{pro:Hrev} and \coref{SGiso}). We recover in this way
\cite[Proposition 1.7]{CG}, in the particular case when the base category is $\mathfrak{M}$.

\section{Preliminaries}

\label{preliminares}

In this section, we shall fix some basic notation and terminology.

\begin{notation}
Throughout this paper $\Bbbk $ will denote a field. All vector spaces will
be defined over $\Bbbk $. The unadorned tensor product $\otimes $ will
denote the tensor product over $\Bbbk $ if not stated otherwise. The
category of vector spaces will be denoted by $\mathfrak{M}$.
\end{notation}

\textbf{Monoidal Categories.}(\cite[Chap. XI]{Kassel}) Recall that a
\emph{monoidal category}\textbf{\ }is a category $\mathcal{M}$ endowed with
an object $\mathbf{1}\in \mathcal{M}$\textbf{\ } (called \emph{unit}), a
functor $\otimes :\mathcal{M}\times \mathcal{M}\rightarrow \mathcal{M}$
(called \emph{tensor product}), and functorial isomorphisms $%
a_{X,Y,Z}:(X\otimes Y)\otimes Z\rightarrow $ $X\otimes (Y\otimes Z)$, $l_{X}:%
\mathbf{1}\otimes X\rightarrow X,$ $r_{X}:X\otimes \mathbf{1}\rightarrow X,$
for every $X,Y,Z$ in $\mathcal{M}$. The functorial morphism $a$ is called
the \emph{associativity constraint }and\emph{\ }satisfies the \emph{Pentagon
Axiom, }that is the following relation
\begin{equation*}
(U\otimes a_{V,W,X})\circ a_{U,V\otimes W,X}\circ (a_{U,V,W}\otimes
X)=a_{U,V,W\otimes X}\circ a_{U\otimes V,W,X}
\end{equation*}%
holds true, for every $U,V,W,X$ in $\mathcal{M}.$ The morphisms $l$ and $r$
are called the \emph{unit constraints} and they obey the \emph{Triangle
Axiom, }that is $(V\otimes l_{W})\circ a_{V,\mathbf{1},W}=r_{V}\otimes W$,
for every $V,W$ in $\mathcal{M}$.

A \emph{monoidal functor}\label{MonFun} (also called strong monoidal in the
literature)
\begin{equation*}
(F,\phi _{0},\phi _{2}):(\mathcal{M},\otimes ,\mathbf{1},a,l,r\mathbf{%
)\rightarrow (}\mathcal{M}^{\prime }\mathfrak{,}\otimes ^{\prime },\mathbf{1}%
^{\prime },a^{\prime },l^{\prime },r^{\prime }\mathbf{)}
\end{equation*}%
between two monoidal categories consists of a functor $F:\mathcal{M}%
\rightarrow \mathcal{M}^{\prime },$ an isomorphism $\phi
_{2}(U,V):F(U)\otimes ^{\prime }F(V)\rightarrow F(U\otimes V),$ natural in $%
U,V\in \mathcal{M}$, and an isomorphism $\phi _{0}:\mathbf{1}^{\prime
}\rightarrow F(\mathbf{1})$ such that the diagram
\begin{equation*}
\xymatrixcolsep{63pt}\xymatrixrowsep{35pt}\xymatrix{ (F(U)\otimes'
F(V))\otimes' F(W) \ar[d]|{a'_{F(U),F(V),F(W)}} \ar[r]^{\phi_2(U,V)\otimes'
F(W)} & F(U\otimes V)\otimes' F(W) \ar[r]^{\phi_2(U\otimes V,W)} &
F((U\otimes V)\otimes W) \ar[d]|{F(a_{ U,V, W})} \\ F(U)\otimes'
(F(V)\otimes' F(W)) \ar[r]^{F(U)\otimes' \phi_2(V,W)} & F(U)\otimes'
F(V\otimes W) \ar[r]^{\phi_2(U,V\otimes W)} & F(U\otimes (V\otimes W)) }
\end{equation*}%
is commutative, and the following conditions are satisfied:
\begin{equation*}
{F(l_{U})}\circ {\phi _{2}(\mathbf{1},U)}\circ ({\phi _{0}\otimes }^{\prime }%
{F(U)})={l}^{\prime }{_{F(U)}},\text{\quad }{F(r_{U})}\circ {\phi _{2}(U,%
\mathbf{1})}\circ ({F(U)\otimes }^{\prime }{\phi _{0}})={r}^{\prime }{%
_{F(U)}.}
\end{equation*}

The monoidal functor is called \emph{strict }if the isomorphisms $\phi
_{0},\phi _{2}$ are identities of $\mathcal{M}^{\prime }$. A \emph{braided
monoidal category} $(\mathcal{M},c)$ is a monoidal category $(\mathcal{M}%
,\otimes ,\mathbf{1})$ equipped with a \emph{braiding} $c$, that is an
isomorphism $c_{U,V}:U\otimes V\rightarrow V\otimes U$, natural in $U,V\in
\mathcal{M}$, satisfying, for all $U,V,W\in \mathcal{M}$,
\begin{eqnarray*}
a_{V,W,U}\circ c_{U,V\otimes W}\circ a_{U,V,W} &=&(V\otimes c_{U,W})\circ
a_{V,U,W}\circ (c_{U,V}\otimes W), \\
a_{W,U,V}^{-1}\circ c_{U\otimes V,W}\circ a_{U,V,W}^{-1} &=&(c_{U,W}\otimes
V)\circ a_{U,W,V}^{-1}\circ (U\otimes c_{V,W}).
\end{eqnarray*}%
Such a category is called \emph{symmetric} if we further have $c_{V,U}\circ
c_{U,V}=\mathrm{Id}_{U\otimes V}$ for every $U,V\in \mathcal{M}$.

A \emph{(symmetric) braided monoidal functor} is a monoidal functor $F:%
\mathcal{M}\rightarrow \mathcal{M}^{\prime }$ such that
\begin{equation*}
F\left( c_{U,V}\right) \circ \phi _{2}(U,V)=\phi _{2}(V,U)\circ c_{F\left(
U\right) ,F\left( V\right) }^{\prime }.
\end{equation*}%
More details on these topics can be found in \cite[Chapter XIII]{Kassel}%
.\medskip

\textbf{Quasi-Bialgebras.} The following definition is not the original one
given in \cite[page 1421]{Drinfeld-QuasiHopf}. We adopt the more general
form of \cite[Remark 1, page 1423]{Drinfeld-QuasiHopf} (see also \cite[%
Proposition XV.1.2]{Kassel}) in order to comprise the case of Hom-categories.

\begin{definition}\delabel{quasi-bialgebra}
A \emph{quasi-bialgebra} is a datum $\left( H,m,u,\Delta ,\varepsilon ,\phi
,\lambda ,\rho \right) $ where

\begin{itemize}
\item $\left( H,m,u\right) $ is an associative algebra;

\item $\Delta :H\rightarrow H\otimes H$ and $\varepsilon :H\rightarrow \Bbbk
$ are algebra maps;

\item $\lambda ,\rho \in H$ are invertible elements;

\item $\phi \in H\otimes H\otimes H$ is a counital $3$-cocycle i.e. it is an
invertible element and satisfies%
\begin{eqnarray*}
\left( H\otimes H\otimes \Delta \right) \left( \phi \right) \cdot \left(
\Delta \otimes H\otimes H\right) \left( \phi \right) &=&\left( 1_{H}\otimes
\phi \right) \cdot \left( H\otimes \Delta \otimes H\right) \left( \phi
\right) \cdot \left( \phi \otimes 1_{H}\right) , \\
\left( H\otimes \varepsilon \otimes H\right) \left( \phi \right) &=&\rho
\otimes \lambda ^{-1};
\end{eqnarray*}

\item $\Delta $ is quasi-coassociative and counitary i.e. it satisfies%
\begin{eqnarray*}
\left( H\otimes \Delta \right) \Delta \left( h\right) &=&\phi \cdot \left(
\Delta \otimes H\right) \Delta \left( h\right) \cdot \phi ^{-1}, \\
\left( \varepsilon \otimes H\right) \Delta \left( h\right) &=&\lambda
^{-1}h\lambda , \\
\left( H\otimes \varepsilon \right) \Delta \left( h\right) &=&\rho
^{-1}h\rho .
\end{eqnarray*}
\end{itemize}

A morphism of quasi-bialgebras (see \cite[page 371]{Kassel})
\begin{equation*}
\Xi :\left( H,m,u,\Delta ,\varepsilon ,\phi ,\lambda ,\rho \right)
\rightarrow \left( H^{\prime },m^{\prime },u^{\prime },\Delta ^{\prime
},\varepsilon ^{\prime },\phi ^{\prime },\lambda ^{\prime },\rho ^{\prime
}\right)
\end{equation*}%
is an algebra homomorphism $\Xi :\left( H,m,u\right) \rightarrow \left(
H^{\prime },m^{\prime },u^{\prime }\right) $ such that
\begin{eqnarray*}
(\Xi \otimes \Xi )\Delta &=&\Delta ^{\prime }\Xi ,\qquad
\varepsilon ^{\prime }\Xi =\varepsilon ,\qquad \left( \Xi \otimes
\Xi \otimes \Xi \right) \left( \phi \right) =\phi ^{\prime }, \\
\Xi \left( \lambda \right) &=&\lambda ^{\prime },\qquad \Xi \left(
\rho \right) =\rho ^{\prime }.
\end{eqnarray*}%
It is an isomorphism of quasi-bialgebras if, in addition, it is invertible.

We will use the following standard notation%
\begin{equation*}
\phi ^{1}\otimes \phi ^{2}\otimes \phi ^{3}:=\phi \text{ (summation
understood).}
\end{equation*}

In the case when $\phi$ is not trivial (that is, a nonzero scalar multiple of $1_H\otimes 1_H\otimes 1_H$)
and $\lambda=\rho=1_H$ we call $H$ an \emph{ordinary quasi-bialgebra}. If
$\phi$ is trivial and $\lambda=\rho=1_H$ we then land at the classical concept
of bialgebra.
\end{definition}

The definition of a quasi-bialgebra is based on the formalism of monoidal categories.
More exactly, if $H$ is a $\Bbbk $-algebra and $\Delta : H\rightarrow H\otimes H$ and
$\varepsilon : H\rightarrow \Bbbk $ are two algebra morphisms then the category of
left $H$-representations, ${}_H\mathfrak{M}$, endowed with the tensor product defined by $\Delta$
and with unit object $\Bbbk $ considered as left $H$-module via $\varepsilon$ is
monoidal if and only if $H$ is a quasi-bialgebra. As we will see later on, the existence of the
above two morphisms $\Delta$, $\varepsilon$ is not needed, as it is implied by the fact that the
monoidal structure on $\mathfrak{M}$ restricts to a monoidal structure on
${}_H\mathfrak{M}$. For further use, at this moment recall only the monoidal structure on
${}_H\mathfrak{M}$ produced by a quasi-bialgebra $H$.

Let $(H,m,u,\Delta ,\varepsilon ,\phi ,\lambda ,\rho )$ be a
quasi-bialgebra. It is well-known, see \cite[page 285 and Proposition XV.1.2]%
{Kassel}, that the category ${_{H}}\mathfrak{M}$ becomes
a monoidal category via the following structure. Given a left $H$-module $V$, we denote by $%
\mu =\mu _{V}^{l}:H\otimes V\rightarrow V,\mu (h\otimes v)=hv$, its left $H$%
-action. The tensor product of two left $H$-modules $V$ and $W$ is a module
via diagonal action i.e. $h\left( v\otimes w\right) =h_{1}v\otimes h_{2}w.$
The unit is $\Bbbk ,$ which is regarded as a left $H$-module via the trivial
action i.e. $h\kappa =\varepsilon \left( h\right) \kappa$, for all $h\in H$ and $\kappa\in \Bbbk$.
The associativity and unit constraints are defined, for all $V,W,Z\in {_{H}}\mathfrak{M}$ and $%
v\in V,w\in W,z\in Z,$ by%
\begin{gather*}
a_{V,W,Z}((v\otimes w)\otimes z):=\phi ^{1}v\otimes (\phi ^{2}w\otimes \phi
^{3}z), \\
l_{V}(1\otimes v):=\lambda v\qquad \text{and}\qquad r_{V}(v\otimes 1):=\rho
v.
\end{gather*}%
The monoidal category we have just described will be denoted by $({_{H}}%
\mathfrak{M},\otimes ,\Bbbk ,a,l,r).$

Let $\left( H,m,u,\Delta ,\varepsilon ,\phi ,\lambda ,\rho \right) $ be a
quasi-bialgebra. Given an invertible element $\alpha \in H\otimes H,$ we can
construct a new quasi-bialgebra $H_{\alpha }=\left( H,m,u,\Delta _{\alpha
},\varepsilon ,\phi _{\alpha },\lambda _{\alpha },\rho _{\alpha }\right) $
where
\begin{eqnarray*}
\Delta _{\alpha }\left( h\right) &=&\alpha \cdot \Delta \left( h\right)
\cdot \alpha ^{-1}, \\
\phi _{\alpha } &=&\left( 1_{H}\otimes \alpha \right) \cdot \left( H\otimes
\Delta \right) \left( \alpha \right) \cdot \phi \cdot \left( \Delta \otimes
H\right) \left( \alpha ^{-1}\right) \cdot \left( \alpha ^{-1}\otimes
1_{H}\right) , \\
\lambda _{\alpha } &=&\lambda \cdot \left( \varepsilon _{H}\otimes H\right)
\left( \alpha ^{-1}\right) , \\
\rho _{\alpha } &=&\rho \cdot \left( H\otimes \varepsilon _{H}\right) \left(
\alpha ^{-1}\right) .
\end{eqnarray*}%
By \cite[Lemma XV.3.4]{Kassel}, for every invertible element $\alpha \in
H\otimes H,$ the identity functor $\mathrm{Id}:{_{H}}\mathfrak{M}\rightarrow
{_{H_{\alpha }}}\mathfrak{M}$ induces a monoidal category isomorphism $%
\Gamma \left( \alpha \right) =\left( \mathrm{Id},\alpha _{0},\alpha
_{2}\right) :{_{H}}\mathfrak{M}\rightarrow {_{H_{\alpha }}}\mathfrak{M,}$
where $\alpha _{0}=\mathrm{Id}_{\Bbbk }$ and $\alpha _{2}\left( V,W\right)
\left( v\otimes w\right) :=\alpha ^{-1}\left( v\otimes w\right) $. The
inverse is $\Gamma \left( \alpha ^{-1}\right) .$

Notice also that for a quasi-bialgebra $H$ there is always an invertible element
$u\in H\otimes H$ such that $H_u$ is an ordinary quasi-bialgebra, and so
${}_H\mathfrak{M}$ is always monoidal isomorphic to a category for which
the unit object and the left and right
unit constraints are those of $\mathfrak{M}$ (see \cite{Drinfeld-QuasiHopf} or the proof
of \prref{ForgetfulMonoidal} below).

\begin{definition}
We refer to \cite[Proposition XV.2.2]{Kassel} but with a different
terminology (cf. \cite[page 1439]{Drinfeld-QuasiHopf}). A quasi-bialgebra $%
\left( H,m,u,\Delta ,\varepsilon ,\phi ,\lambda ,\rho \right) $ is called
\emph{quasi-triangular} whenever there exists an invertible element $R\in
H\otimes H$ such that, for every $h\in H,$ one has%
\begin{eqnarray}
\left( \Delta \otimes H\right) \left( R\right) &=&\left[
\begin{array}{c}
\left( \phi ^{2}\otimes \phi ^{3}\otimes \phi ^{1}\right) \left(
R^{1}\otimes 1\otimes R^{2}\right) \left( \phi ^{1}\otimes \phi ^{3}\otimes
\phi ^{2}\right) ^{-1} \\
\left( 1\otimes R^{1}\otimes R^{2}\right) \left( \phi ^{1}\otimes \phi
^{2}\otimes \phi ^{3}\right)%
\end{array}%
\right]  \label{form:quasi1} \\
\left( H\otimes \Delta \right) \left( R\right) &=&\left[
\begin{array}{c}
\left( \phi ^{3}\otimes \phi ^{1}\otimes \phi ^{2}\right) ^{-1}\left(
R^{1}\otimes 1\otimes R^{2}\right) \left( \phi ^{2}\otimes \phi ^{1}\otimes
\phi ^{3}\right) \\
\left( R^{1}\otimes R^{2}\otimes 1\right) \left( \phi ^{1}\otimes \phi
^{2}\otimes \phi ^{3}\right) ^{-1}%
\end{array}%
\right]  \label{form:quasi2} \\
\Delta ^{cop}\left( h\right) &=&R\Delta \left( h\right) R^{-1}
\label{form:quasi3}
\end{eqnarray}%
where $\phi :=\phi ^{1}\otimes \phi ^{2}\otimes \phi ^{3},$ $R=R^{1}\otimes
R^{2}.$

A \emph{morphism of quasi-triangular quasi-bialgebras} is a morphism $%
\Xi :H\rightarrow H^{\prime }$ of quasi-bialgebras such that $\left(\Xi\otimes
\Xi\right) \left( R\right) =R^{\prime }.$
\end{definition}

By \cite[Proposition XV.2.2]{Kassel}, ${_{H}}\mathfrak{M}=({_{H}}\mathfrak{M}%
,\otimes ,\Bbbk ,a,l,r)$ is braided if and only if there is an invertible
element $R\in H\otimes H$ such that $\left( H,m,u,\Delta ,\varepsilon ,\phi
,\lambda ,\rho ,R\right) $ is quasi-triangular. Note that the braiding is
given, for all $X,Y\in {_{H}}\mathfrak{M},$ by%
\begin{equation*}
c_{X,Y}:X\otimes Y\rightarrow Y\otimes X:x\otimes y\mapsto R^{2}y\otimes
R^{1}x.
\end{equation*}%
Moreover ${_{H}}\mathfrak{M}$ is symmetric if and only if we further assume%
\begin{equation}
R^{2}\otimes R^{1}=R^{-1}.  \label{form:triang}
\end{equation}%
Such a quasi-bialgebra will be called a \emph{triangular quasi-bialgebra}. A
morphism of triangular quasi-bialgebras is just a morphism of the underlying
quasi-triangular quasi-bialgebras structures.

Given an invertible element $\alpha \in H\otimes H,$ if $H$ is
(quasi-)triangular then so is $H_{\alpha }$ with respect to
\begin{equation*}
R_{\alpha }=\left( \alpha ^{2}\otimes \alpha ^{1}\right) R\alpha ^{-1}
\end{equation*}%
where $\alpha :=\alpha ^{1}\otimes \alpha ^{2}.$ This depends on the fact
that the monoidal category isomorphism $\Gamma \left( \alpha \right) =\left(
\mathrm{Id},\alpha _{0},\alpha _{2}\right) :{_{H}}\mathfrak{M}\rightarrow {%
_{H_{\alpha }}}\mathfrak{M}$ induces a (symmetric) braided structure on ${%
_{H_{\alpha }}}\mathfrak{M.}$ In particular we have%
\begin{equation*}
c_{H,H}^{H_{\alpha }}=\alpha _{2}^{-1}\left( H,H\right) \circ F\left(
c_{H,H}^{H}\right) \circ \alpha _{2}\left( H,H\right) .
\end{equation*}%
By \cite[Proposition XV.2.2]{Kassel}, $c_{H,H}^{H_{\alpha }}$ is of the form
$c_{H,H}^{H_{\alpha }}\left( x\otimes y\right) =R_{\alpha }^{2}y\otimes
R_{\alpha }^{1}x$ for all $x,y\in H$ where%
\begin{eqnarray*}
R_{\alpha }^{2}\otimes R_{\alpha }^{1} &=&c_{H,H}^{H_{\alpha }}\left(
1_{H}\otimes 1_{H}\right) =\left( \alpha _{2}^{-1}\left( H,H\right) \circ
F\left( c_{H,H}^{H}\right) \circ \alpha _{2}\left( H,H\right) \right) \left(
1_{H}\otimes 1_{H}\right) \\
&=&\left( \alpha _{2}^{-1}\left( H,H\right) \circ F\left( c_{H,H}^{H}\right)
\right) \left( \alpha ^{-1}\left( 1_{H}\otimes 1_{H}\right) \right) =\left(
\alpha _{2}^{-1}\left( H,H\right) \circ F\left( c_{H,H}^{H}\right) \right)
\left( \alpha ^{-1}\right) \\
&=&\alpha _{2}^{-1}\left( H,H\right) \left( R^{2}\left( \alpha ^{-1}\right)
^{2}\otimes R^{1}\left( \alpha ^{-1}\right) ^{1}\right) \\
&=&\alpha \left( R^{2}\left( \alpha ^{-1}\right) ^{2}\otimes R^{1}\left(
\alpha ^{-1}\right) ^{1}\right) =\alpha ^{1}R^{2}\left( \alpha ^{-1}\right)
^{2}\otimes \alpha ^{2}R^{1}\left( \alpha ^{-1}\right) ^{1}
\end{eqnarray*}%
and hence $R_{\alpha }=R_{\alpha }^{1}\otimes R_{\alpha }^{2}=\alpha
^{2}R^{1}\left( \alpha ^{-1}\right) ^{1}\otimes \alpha ^{1}R^{2}\left(
\alpha ^{-1}\right) ^{2}=\left( \alpha ^{2}\otimes \alpha ^{1}\right)
R\alpha ^{-1}$ as claimed above.

\section{Quasi-triangular quasi-bialgebra structures on a group algebra of a torsion-free abelian group}

Let $\Bbbk$ be a field and $G$ a torsion-free abelian group. We are interested in classifying the
(braided) monoidal structures on the category of left modules over the group algebra $\Bbbk[G]$ induced
by that of $\mathfrak{M}$. We shall see that this is equivalent to
the classification, up to deformation by an invertible element, of (quasi-triangular)
quasi-bialgebra structures on the group algebra $\Bbbk[G]$.

We start with the monoidal case. In general, for $H$ a $\Bbbk$-algebra, we say that the monoidal
structure on $\mathfrak{M}$ restricts to a monoidal structure on ${}_H\mathfrak{M}$ if
\begin{itemize}
\item[(a)] for any two left $H$-modules $X, Y$ the tensor product $X\otimes Y$ in $\mathfrak{M}$
admits a left $H$-module structure;
\item[(b)] the tensor product in $\mathfrak{M}$ of two left $H$-module morphisms is a morphism
in ${}_H\mathfrak{M}$, and so $\otimes$ induces a functor from ${}_H\mathfrak{M}\times {}_H\mathfrak{M}$
to ${}_H\mathfrak{M}$;
\item[(c)] $\Bbbk$, as a trivial $\Bbbk$-module, admits a left $H$-module structure;
\item[(d)] there exist functorial isomorphisms
$a=\left(a_{X, Y, Z}: (X\otimes Y)\otimes Z\rightarrow X\otimes (Y\otimes Z)\right)_{X, Y, Z\in {{}_H\mathfrak{M}}}$,
$l=\left(l_X: \Bbbk\otimes X\rightarrow X\right)_{X\in {{}_H\mathfrak{M}}}$ and
$r=\left(r_X: X\otimes \Bbbk\rightarrow X\right)_{X\in {{}_H\mathfrak{M}}}$ in ${}_H\mathfrak{M}$ such that the
Pentagon axiom and the Triangle Axiom are satisfied.
\end{itemize}

The next result is a slightly improved version of \cite[Proposition XV.1.2]{Kassel}
and can be viewed as a reconstruction type theorem for quasi-bialgebras.

\begin{theorem}\thlabel{reconstructionQuasi}
Let $\Bbbk$ be a field and $H$ a $\Bbbk$-algebra. Then there exists a one to one correspondence
between
\begin{itemize}
\item[$\bullet$] monoidal structures on ${}_H\mathfrak{M}$ induced by the strict monoidal
structure of $\mathfrak{M}$;
\item[$\bullet$] quasi-bialgebra structures on $H$.
\end{itemize}
\end{theorem}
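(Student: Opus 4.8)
The plan is to construct the correspondence in both directions and then check the two assignments are mutually inverse. In one direction, starting from a quasi-bialgebra $(H,m,u,\Delta,\varepsilon,\phi,\lambda,\rho)$ one takes the monoidal structure on ${}_H\mathfrak{M}$ recalled above: diagonal action via $\Delta$ on tensor products, unit $\Bbbk$ with action via $\varepsilon$, and constraints $a,l,r$ built from $\phi,\lambda,\rho$. This is essentially \cite[Proposition XV.1.2]{Kassel} and visibly satisfies (a)--(d). All the real work lies in the reverse direction: given a monoidal structure on ${}_H\mathfrak{M}$ satisfying (a)--(d), I must reconstruct the data $(\Delta,\varepsilon,\phi,\lambda,\rho)$ and verify the quasi-bialgebra axioms.

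First I would reconstruct $\Delta$ and $\varepsilon$. By (a) the vector space $H\otimes H$ (regular module tensored with itself) carries a left $H$-action, so I set $\Delta(h):=h\cdot(1_H\otimes1_H)\in H\otimes H$. The key observation is that this single element controls every tensor product: for $X,Y\in{}_H\mathfrak{M}$ and $x\in X,\ y\in Y$ the maps $\rho_x\colon H\to X,\ h\mapsto hx$ and $\rho_y\colon H\to Y$ are morphisms in ${}_H\mathfrak{M}$, so by (b) their tensor product is $H$-linear, whence $h(x\otimes y)=(\rho_x\otimes\rho_y)(h\cdot(1_H\otimes1_H))=\Delta(h)^1x\otimes\Delta(h)^2y$. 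Thus the action on any $X\otimes Y$ is forced to be diagonal via $\Delta$. Applying this with $x=\Delta(h')^1,\ y=\Delta(h')^2$ yields $\Delta(hh')=h\Delta(h')=\Delta(h)\Delta(h')$, and $\Delta(1_H)=1_H\otimes1_H$, so $\Delta$ is an algebra map. By (c), $\Bbbk$ is a left $H$-module, i.e. $h\cdot\kappa=\varepsilon(h)\kappa$ for a unique $\varepsilon\colon H\to\Bbbk$, and the module axioms say precisely that $\varepsilon$ is an algebra map.

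Next I would reconstruct $\phi,\lambda,\rho$ from (d) and read off the axioms. Put $\phi:=a_{H,H,H}((1_H\otimes1_H)\otimes1_H)$, $\lambda:=l_H(1_\Bbbk\otimes1_H)$ and $\rho:=r_H(1_H\otimes1_\Bbbk)$. Exactly as for $\Delta$, naturality of $a,l,r$ tested against the maps $\rho_x$ gives $a_{X,Y,Z}((x\otimes y)\otimes z)=\phi^1x\otimes(\phi^2y\otimes\phi^3z)$, $l_V(1\otimes v)=\lambda v$ and $r_V(v\otimes1)=\rho v$, and since $a,l,r$ are isomorphisms the elements $\phi,\lambda,\rho$ are invertible. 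Now I would exploit that each constraint is itself $H$-linear. Evaluating the $H$-linearity of $a_{H,H,H}$ at $(1_H\otimes1_H)\otimes1_H$, and using that both $(H\otimes H)\otimes H$ and $H\otimes(H\otimes H)$ carry the diagonal action, gives $(H\otimes\Delta)\Delta(h)\cdot\phi=\phi\cdot(\Delta\otimes H)\Delta(h)$, i.e. quasi-coassociativity; likewise the $H$-linearity of $l_H$ and $r_H$ yields $(\varepsilon\otimes H)\Delta(h)=\lambda^{-1}h\lambda$ and $(H\otimes\varepsilon)\Delta(h)=\rho^{-1}h\rho$. Finally, evaluating the Pentagon Axiom at $((1_H\otimes1_H)\otimes1_H)\otimes1_H$ produces the $3$-cocycle identity for $\phi$, while evaluating the Triangle Axiom at $(1_H\otimes1_\Bbbk)\otimes1_H$ (the middle factor being acted on by $\varepsilon$) produces $(H\otimes\varepsilon\otimes H)(\phi)=\rho\otimes\lambda^{-1}$. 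Hence $(H,m,u,\Delta,\varepsilon,\phi,\lambda,\rho)$ is a quasi-bialgebra.

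It remains to check the assignments are mutually inverse. Starting from a quasi-bialgebra and applying the reconstruction returns the same data by a direct evaluation (e.g. $a_{H,H,H}((1\otimes1)\otimes1)=\phi^1\otimes(\phi^2\otimes\phi^3)=\phi$, and similarly for $\Delta,\varepsilon,\lambda,\rho$); conversely, the computations above show that a structure satisfying (a)--(d) has its tensor action, unit action and constraints completely determined by the reconstructed $(\Delta,\varepsilon,\phi,\lambda,\rho)$, so rebuilding recovers the original. I expect the reverse direction to be the main obstacle: one must show that conditions (a)--(d), which merely posit the existence of suitable module structures and constraints, rigidly force the diagonal form of the action and the coherence data, and then that the Pentagon and Triangle axioms translate---after careful bookkeeping of the several diagonal actions on triple tensor products and of the underlying vector-space reassociations---into exactly the $3$-cocycle and counital conditions. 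This last translation is the most computation-heavy step.
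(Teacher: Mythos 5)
Your proposal is correct and follows essentially the same route as the paper: you reconstruct $\Delta$ from $h\cdot(1_H\otimes 1_H)$ and force the diagonal action via naturality against the module maps $h\mapsto hx$, recover $\varepsilon$ from the action on $\Bbbk$, and read off $\phi,\lambda,\rho$ by evaluating the constraints at identity elements. The only difference is one of detail: where the paper delegates the verification of the quasi-bialgebra axioms to \cite[Proposition XV.1.2]{Kassel}, you carry out that bookkeeping (Pentagon $\Rightarrow$ $3$-cocycle, Triangle $\Rightarrow$ counitality, $H$-linearity of the constraints $\Rightarrow$ quasi-coassociativity) explicitly.
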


\begin{proof}
Assume that the strict monoidal structure on $\mathfrak{M}$ induces a monoidal
structure on ${}_H\mathfrak{M}$. In particular, this implies that we have a left $H$-module structure
$\cdot: H\otimes (H\otimes H)\rightarrow H\otimes H$ on $H\otimes H$.
If we define $\Delta: H\rightarrow H\otimes H$ given by $\Delta(h)=h\cdot (1_H\otimes 1_H)$, for all $h\in H$, we then
claim that $\Delta$ is an algebra map. Indeed, it is clear that $\Delta(1_H)=1_H\otimes 1_H$. To see that $\Delta$ is
multiplicative we proceed as follows. Let $X\in {}_H\mathfrak{M}$ and fix $x\in X$. Then
$\varphi_x: H\ni h\mapsto hx\in X$ is a left $H$-module morphism. Similarly, for $Y\in {}_H\mathfrak{M}$
and $y\in Y$ define $\varphi_y: H\rightarrow Y$, a left $H$-module morphism. According to (a) and (b)
we have $\varphi_x\otimes \varphi_y: H\otimes H\rightarrow X\otimes Y$ a left $H$-linear morphism, hence
\[
(\varphi_x\otimes \varphi_y)(h\cdot (h'\otimes h{''}))=h\cdot (h'x\otimes h{''}y),
\]
for all $h, h', h{''}\in H$. If we take $h'=h{''}=1_H$ and denote $\Delta(h)=h_1\otimes h_2$ (summation implicitly understood),
we then get that $h\cdot (x\otimes y)=h_1x\otimes h_2y$, for all $h\in H$, and so $\Delta$ determines completely the left $H$-module
structure on the tensor product $X\otimes Y$. It follows now easily that $\Delta$ is multiplicative,
providing that $H\otimes H$ has the usual componentwise algebra structure.

We look now at the condition (c). We show that giving a left $H$-module structure on $\Bbbk$ is equivalent to giving an algebra map
$\varepsilon: H\rightarrow \Bbbk$. Indeed, let $\cdot : H\otimes \Bbbk\rightarrow \Bbbk$ be a left $H$-module
structure on $\Bbbk$. Since $\cdot$ is $\Bbbk$-linear we have $h\cdot \kappa=h\cdot(\kappa 1_\Bbbk)=
\kappa (h\cdot 1_\Bbbk)=(\kappa h)\cdot 1_\Bbbk$, for all $\kappa\in \Bbbk$ and $h\in H$. So if we define
$\varepsilon: H\rightarrow \Bbbk$ given by $\varepsilon(h):=h\cdot 1_\Bbbk$, for all $h\in H$,
then $\kappa\varepsilon(h)=\varepsilon(\kappa h)$, for all $h\in H$ and $\kappa\in \Bbbk$.
Otherwise stated, $\varepsilon$ is $\Bbbk$-linear and
$h\cdot \kappa=(\kappa h)\cdot 1_\Bbbk=\varepsilon(\kappa h)=\kappa \varepsilon(h)$, for
all $\kappa\in \Bbbk$ and $h\in H$. Now, $\varepsilon$ is multiplicative since, for all $h, g\in H$,
$$
\varepsilon (hg)=(hg)\cdot 1_\Bbbk=h\cdot (g\cdot 1_\Bbbk)=h\cdot \varepsilon(g)=\varepsilon(\varepsilon(g)h)=
\varepsilon(h)\varepsilon(g).
$$
In addition, $\varepsilon(1_H)=1_H\cdot 1_\Bbbk=1_\Bbbk$, and therefore $\varepsilon$ is an algebra morphism.

Conversely, if $\varepsilon: H\rightarrow \Bbbk$ is an algebra
map then clearly $\Bbbk$ is a left $H$-module via the structure defined by
$h\cdot \kappa=\varepsilon(h)\kappa$, for all $h\in H$ and $\kappa\in \Bbbk$. It is immediate that
the two correspondences defined above are inverses of each other. As far as we are concerned,
retain that (c) implies the existence of an algebra map $\varepsilon: H\rightarrow \Bbbk$ such that
$h\cdot \kappa=\varepsilon(h)\kappa$, for all $h\in H$ and $\kappa\in \Bbbk$.

Concluding, the desired monoidal structure on ${}_H\mathfrak{M}$ is induced by a
triple $(H, \Delta, \varepsilon)$ as in the statement of \cite[Proposition XV.1.2]{Kassel}. Consequently,
$(H, \Delta, \varepsilon, \phi, \lambda, \rho)$ is a quasi-bialgebra, where
\[
\phi=a_{H, H, H}(1_H\otimes 1_H\otimes 1_H)~,~\lambda=l_H(1_\Bbbk\otimes 1_H)~\mbox{\rm and}~\rho=r_H(1_H\otimes 1_\Bbbk)~,
\]
respectively. For the other way around see the comments made after \deref{quasi-bialgebra}.
\end{proof}

If we restrict ourself to the strict monoidal case we then get the following well-known result.

\begin{corollary}\colabel{reconstructionHopf}
Let $H$ be a $\Bbbk$-algebra. Then there is a one to one correspondence between
\begin{itemize}
\item[$\bullet$] the strict monoidal structures on ${}_H\mathfrak{M}$ such that the forgetful functor
$\mathfrak{U}: {}_H\mathfrak{M}\rightarrow \mathfrak{M}$ is strict monoidal;
\item[$\bullet$] bialgebra structures on $H$.
\end{itemize}
\end{corollary}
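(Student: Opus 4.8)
The plan is to derive Corollary~\ref{co:reconstructionHopf} as a specialization of \thref{reconstructionQuasi}. By that theorem, strict monoidal structures on ${}_H\mathfrak{M}$ induced by the strict monoidal structure of $\mathfrak{M}$ correspond bijectively to quasi-bialgebra structures $(H,m,u,\Delta,\varepsilon,\phi,\lambda,\rho)$ on $H$. The extra hypothesis here is twofold: the monoidal structure on ${}_H\mathfrak{M}$ is required to be \emph{strict}, and the forgetful functor $\mathfrak{U}:{}_H\mathfrak{M}\rightarrow\mathfrak{M}$ is required to be strict monoidal. I would show that imposing these two conditions is exactly equivalent to forcing $\phi=1_H\otimes 1_H\otimes 1_H$ and $\lambda=\rho=1_H$, that is, the quasi-bialgebra degenerates to an ordinary bialgebra in the sense recalled at the end of \deref{quasi-bialgebra}.

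First I would translate ``$\mathfrak{U}$ is strict monoidal'' into conditions on the structure elements. Under the correspondence of \thref{reconstructionQuasi}, the associativity and unit constraints on ${}_H\mathfrak{M}$ are precisely $a_{V,W,Z}((v\otimes w)\otimes z)=\phi^1 v\otimes(\phi^2 w\otimes\phi^3 z)$, $l_V(1\otimes v)=\lambda v$ and $r_V(v\otimes 1)=\rho v$. The forgetful functor sends these to the corresponding maps on underlying vector spaces. For $\mathfrak{U}$ to be strict monoidal, it must carry the unit $\Bbbk$ of ${}_H\mathfrak{M}$ to the unit $\Bbbk$ of $\mathfrak{M}$, carry $\otimes$ to $\otimes$ on the nose, and send $a$, $l$, $r$ to the identity constraints of the strict category $\mathfrak{M}$. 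Applying $\mathfrak{U}$ to $a_{H,H,H}$, $l_H$ and $r_H$ and evaluating on $1_H\otimes 1_H\otimes 1_H$, $1_\Bbbk\otimes 1_H$ and $1_H\otimes 1_\Bbbk$ respectively, strictness forces $\phi=1_H\otimes 1_H\otimes 1_H$ (so that $a$ becomes the identity) together with $\lambda=\rho=1_H$ (so that $l$ and $r$ become identities). Conversely, if $\phi$, $\lambda$, $\rho$ are all trivial, the constraints are identities and $\mathfrak{U}$ is visibly strict monoidal.

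With $\phi$ trivial and $\lambda=\rho=1_H$, the counital $3$-cocycle condition and the quasi-coassociativity and counitarity axioms of \deref{quasi-bialgebra} collapse: quasi-coassociativity $(H\otimes\Delta)\Delta(h)=\phi\cdot(\Delta\otimes H)\Delta(h)\cdot\phi^{-1}$ becomes honest coassociativity $(H\otimes\Delta)\Delta=(\Delta\otimes H)\Delta$, and the counit relations $(\varepsilon\otimes H)\Delta(h)=\lambda^{-1}h\lambda$ and $(H\otimes\varepsilon)\Delta(h)=\rho^{-1}h\rho$ become the usual counit axioms $(\varepsilon\otimes H)\Delta(h)=h=(H\otimes\varepsilon)\Delta(h)$. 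Thus $(H,m,u,\Delta,\varepsilon)$ is precisely an ordinary bialgebra. This shows that the restricted correspondence lands exactly in the set of bialgebra structures, and since the assignment is a bijection by \thref{reconstructionQuasi}, restricting it to the strict-with-strict-forgetful case yields a bijection onto bialgebra structures.

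The main obstacle, though it is a mild one, is ensuring that the two added hypotheses are genuinely independent and that neither is automatically implied by the other: strictness of the monoidal structure on ${}_H\mathfrak{M}$ alone does not force $\mathfrak{U}$ to be strict monoidal, since one could in principle have identity constraints after a change of coordinates while $\mathfrak{U}$ still introduces nontrivial $\phi_0$, $\phi_2$. I would handle this by checking both directions explicitly at the level of the structure maps, making sure that ``$\mathfrak{U}$ strict monoidal'' pins down $\phi_2=\mathrm{id}$ and $\phi_0=\mathrm{id}$, which together with strictness of $a$, $l$, $r$ gives exactly the trivialization $\phi=1^{\otimes 3}$, $\lambda=\rho=1_H$. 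Everything else is the routine verification, already recalled after \deref{quasi-bialgebra}, that a trivial $\phi$ and trivial $\lambda,\rho$ reduce a quasi-bialgebra to a bialgebra.
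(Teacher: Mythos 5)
Your proposal is correct and follows essentially the same route as the paper, which states this corollary without a separate proof precisely because it is the specialization of \thref{reconstructionQuasi} to the case where the constraints $\phi$, $\lambda$, $\rho$ are forced to be trivial, whereupon the quasi-bialgebra axioms collapse to the ordinary bialgebra axioms exactly as you describe. (A minor remark: for monoidal structures induced by that of $\mathfrak{M}$, strictness of the constraints on ${}_H\mathfrak{M}$ already means $\phi=1^{\otimes 3}$ and $\lambda=\rho=1_H$, so the two hypotheses you worry about separating in fact coincide in this setting; this does not affect your argument.)
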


If $H$ is a quasi-bialgebra then the forgetful functor $\mathfrak{U}: {}_H\mathfrak{M}\rightarrow \mathfrak{M}$
is not necessarily monoidal, although the monoidal structure on ${}_H\mathfrak{M}$ is induced by the strict
monoidal structure on $\mathfrak{M}$. More exactly, we have the following situation.

\begin{proposition}\prlabel{ForgetfulMonoidal}
Let $(H, m, u, \Delta, \varepsilon, \phi, \lambda, \rho)$ be a quasi-bialgebra. Then the forgetful
functor $\mathfrak{U}: {}_H\mathfrak{M}\rightarrow \mathfrak{M}$ is monoidal if and only if there
exists an invertible element $\mathfrak{f}\in H\otimes H$ such that
$H_\mathfrak{f}$ is an ordinary bialgebra.
\end{proposition}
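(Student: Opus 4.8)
The plan is to prove the two implications separately, using the twisting construction $H\mapsto H_{\mathfrak f}$ and the monoidal isomorphism $\Gamma(\mathfrak f)$ recalled before the statement. The ``if'' part is a short composition argument; the real work is the ``only if'' part, whose crux is to reconstruct $\phi_2$ from a single invertible element $\mathfrak f^{-1}\in H\otimes H$ and then to recognise the associativity coherence of the monoidal functor as exactly the statement that the twisted associator $\phi_{\mathfrak f}$ is trivial.

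For the ``if'' direction, suppose there is an invertible $\mathfrak f\in H\otimes H$ with $H_{\mathfrak f}$ an ordinary bialgebra. Then $\phi_{\mathfrak f}=1_H\otimes1_H\otimes1_H$ and $\lambda_{\mathfrak f}=\rho_{\mathfrak f}=1_H$, so the associativity and unit constraints of ${}_{H_{\mathfrak f}}\mathfrak M$ coincide with the strict ones of $\mathfrak M$; hence by \coref{reconstructionHopf} the forgetful functor $\mathfrak U'\colon{}_{H_{\mathfrak f}}\mathfrak M\to\mathfrak M$ is strict monoidal. Since $\Gamma(\mathfrak f)\colon{}_H\mathfrak M\to{}_{H_{\mathfrak f}}\mathfrak M$ is a monoidal isomorphism which is the identity on underlying vector spaces and maps, we have $\mathfrak U=\mathfrak U'\circ\Gamma(\mathfrak f)$ as functors, and a composite of monoidal functors is monoidal.

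For the ``only if'' direction, assume $\mathfrak U$ is monoidal with structure isomorphisms $\phi_0\colon\Bbbk\to\mathfrak U(\mathbf 1)=\Bbbk$ and $\phi_2(X,Y)\colon X\otimes Y\to X\otimes Y$, and write $\phi_0(1_\Bbbk)=c\in\Bbbk\setminus\{0\}$. I first reconstruct $\phi_2$ exactly as in the proof of \thref{reconstructionQuasi}: for $x\in X$, $y\in Y$ the maps $h\mapsto hx$ and $h\mapsto hy$ are $H$-linear, hence so is their tensor product, and naturality of $\phi_2$ evaluated at $1_H\otimes1_H$ gives $\phi_2(X,Y)(x\otimes y)=\mathfrak f^{-1}\cdot(x\otimes y)$, where $\mathfrak f^{-1}:=\phi_2(H,H)(1_H\otimes1_H)$ acts through the componentwise $H\otimes H$-action. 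As $\phi_2(H,H)$ is then left multiplication by $\mathfrak f^{-1}$ on the regular module $H\otimes H$ and is bijective, $\mathfrak f^{-1}$ is a unit of $H\otimes H$, so $\mathfrak f$ is well defined.

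It then remains to translate the coherence axioms into identities in $H\otimes H\otimes H$. Reading off the element acting in each composite (using that $U\otimes V$ carries the diagonal $\Delta$-action, so $\phi_2(U\otimes V,W)$ acts by $(\Delta\otimes H)(\mathfrak f^{-1})$ and $\phi_2(U,V\otimes W)$ by $(H\otimes\Delta)(\mathfrak f^{-1})$, while $\mathfrak U(a_{U,V,W})$ acts by $\phi$), the associativity coherence for $\phi_2$, evaluated on regular modules at $1_H\otimes1_H\otimes1_H$, becomes
\begin{equation*}
\phi\cdot(\Delta\otimes H)(\mathfrak f^{-1})\cdot(\mathfrak f^{-1}\otimes1_H)=(H\otimes\Delta)(\mathfrak f^{-1})\cdot(1_H\otimes\mathfrak f^{-1}).
\end{equation*}
Substituting the resulting expression for $\phi$ into the twisting formula for $\phi_{\mathfrak f}$ recalled before this proposition, the four pairs of mutually inverse factors cancel and yield $\phi_{\mathfrak f}=1_H\otimes1_H\otimes1_H$. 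Likewise the two unit axioms, computed with $l_U(1\otimes u)=\lambda u$ and $r_U(u\otimes1)=\rho u$, reduce to $c\,\lambda_{\mathfrak f}=c\,\rho_{\mathfrak f}=1_H$, i.e. $\lambda_{\mathfrak f}=\rho_{\mathfrak f}=c^{-1}1_H$. The scalar $c$ coming from $\phi_0$ is the only subtlety left, and it is removed by replacing $\mathfrak f$ with $c^{-1}\mathfrak f$: this leaves $\phi_{\mathfrak f}$ unchanged (the scalars cancel in its defining formula) while rescaling $\lambda_{\mathfrak f}$ and $\rho_{\mathfrak f}$ to $1_H$. Hence $H_{c^{-1}\mathfrak f}$ is an ordinary bialgebra, which completes the proof. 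The main obstacle is the bookkeeping in this last paragraph, namely keeping track of the $\Delta$-legs so that the coherence axiom matches the twist formula term by term.
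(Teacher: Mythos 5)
Your proof is correct and follows essentially the same route as the paper's: reconstruct $\phi_2$ from the single element $\phi_2(H,H)(1_H\otimes 1_H)$ via naturality against the maps $h\mapsto hx$, identify the associativity coherence with triviality of the twisted associator $\phi_{\mathfrak f}$, and absorb the scalar coming from $\phi_0$ into the twist, with the converse handled by composing with $\Gamma(\mathfrak f)$. The only cosmetic difference is that the paper first normalizes to $\lambda=\rho=1_H$ by a preliminary Drinfeld twist $c^{-1}\rho\otimes\lambda$ before analysing $\phi_2$, whereas you carry general $\lambda,\rho$ directly through the unit-axiom computation and rescale $\mathfrak f$ at the end; both are valid.
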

\begin{proof}
Let us start by noting that, without loss of generality, we can assume $\lambda=\rho=1_H$. This observation
is due to Drinfeld \cite{Drinfeld-QuasiHopf} and reduces the study of quasi-bialgebras to those of this type.

Indeed, applying $\varepsilon\otimes \varepsilon$ to the both sides of
$(H\otimes \varepsilon\otimes H)(\phi)=\rho\otimes \lambda^{-1}$
we get $\varepsilon(\phi^1)\varepsilon(\phi^2)\varepsilon(\phi^3)=\varepsilon(\rho)\varepsilon(\lambda^{-1})$.
On the other hand, it follows from $\varepsilon(h_1)h_2=\lambda^{-1}h\lambda$ that $\varepsilon(h_1)\varepsilon(h_2)=\varepsilon(h)$,
for any $h\in H$. Therefore, applying $\varepsilon \otimes \varepsilon \otimes \varepsilon \otimes \varepsilon$ to
the both sides of
\[
(H\otimes H\otimes \Delta)(\phi)(\Delta\otimes H\otimes H)(\phi)=
(1_H\otimes \phi)(H\otimes\Delta\otimes H)(\phi)(\phi\otimes 1_H)
\]
and using that $\varepsilon(\phi^1)\varepsilon(\phi^2)\varepsilon(\phi^3)$ is invertible in $\Bbbk$
we obtain $\varepsilon(\phi^1)\varepsilon(\phi^2)\varepsilon(\phi^3)=1$. Correlated to
$\varepsilon(\phi^1)\varepsilon(\phi^2)\varepsilon(\phi^3)=\varepsilon(\rho)\varepsilon(\lambda^{-1})$
this yields $\varepsilon(\lambda)=\varepsilon(\rho)$.

Let us denote $c:=\varepsilon(\lambda)=\varepsilon(\rho)$. If $u=c^{-1}\rho\otimes \lambda$
then one can see easily that $u$ is invertible and $\lambda_u=\rho_u=1_H$. Thus $H_u$ is a quasi-bialgebra
for which $\lambda_u=\rho_u=1_H$. Furthermore, $\Gamma(u): {}_H\mathfrak{M}\rightarrow {}_{H_u}\mathfrak{M}$
is a monoidal category isomorphism and $\mathfrak{U}_u\circ \Gamma(u)=\mathfrak{U}$, where we denoted by
$\mathfrak{U}_u: {}_{H_u}\mathfrak{M}\rightarrow \mathfrak{M}$ the corresponding forgetful functor.
Since a composition of monoidal functors is monoidal as well, we get that $\mathfrak{U}$ is monoidal
if and only if $\mathfrak{U}_u$ is so. So we reduced the problem to the case when $\lambda=\rho=1_H$, as desired.

Assume now that $(\mathfrak{U}, \phi_0, \phi_2): {}_H\mathfrak{M}\rightarrow \mathfrak{M}$ is a monoidal functor, and
take $v=\phi_2(H, H)(1_H\otimes 1_H)\in H\otimes H$. We first show that $v$ is invertible, and that it
determines completely $\phi_2$. To this end, let $X, Y$ be two left $H$-modules and fix $x\in X$ and $y\in Y$. If
$\varphi_x: H\rightarrow X$ and $\varphi_y: H\rightarrow Y$ are the left $H$-module morphisms defined in the proof of
\thref{reconstructionQuasi} then by the naturalness of $\phi_2$ we obtain
\[
(\varphi_x\otimes \varphi_y)\phi_2(H, H)=\phi_2(X, Y)(\varphi_x\otimes \varphi_y).
\]
Evaluating the above equality in $1_H\otimes 1_H$ we get $\phi_2(X, Y)(x\otimes y)=v^1x\otimes v^2y$, where
$v=v^1\otimes v^2\in H\otimes H$.

By similar arguments applied now to $\phi_2^{-1}$, the inverse of $\phi_2$, we
deduce that $w=\phi_2^{-1}(H, H)(1_H\otimes 1_H)\in H\otimes H$ determines completely $\phi_2^{-1}$. More exactly,
for $X, Y\in {}_H\mathfrak{M}$ we have $\phi^{-1}_2(X, Y)(x\otimes y)=w^1x\otimes w^2y$, for all $x\in X$ and $y\in Y$,
where $w=w^1\otimes w^2$. Since $\phi_2$ and $\phi_2^{-1}$ are inverses it follows now that $v$ is invertible
with $v^{-1}=w$.

Finally, the commutativity of the diagram involving $\phi_2$ in the definition of a monoidal functor comes out as
\[
(H\otimes \Delta)(v)\cdot(1_H\otimes v)\cdot (x\otimes y\otimes z)=\phi\cdot(\Delta\otimes H)(v)\cdot(v\otimes 1_H)\cdot (x\otimes y\otimes z),
\]
for all $X, Y, Z\in {}_H\mathfrak{M}$ and $x\in X$, $y\in Y$ and $z\in Z$. Clearly this is equivalent to the fact that
$\phi_{v^{-1}}=1_H\otimes 1_H\otimes 1_H$.

Likewise, if $\phi_0: \Bbbk\rightarrow \Bbbk$ is a $\Bbbk$-linear isomorphism then there is a nonzero
$\varsigma\in \Bbbk$ such that $\phi_0(\kappa)=\varsigma\kappa$, for all $\kappa\in \Bbbk$. Consequently,
the required equations for $\phi_0$ in the definition of a monoidal functor are equivalent to
$\varsigma \varepsilon(v^1)v^2=1_H=\varsigma\varepsilon(v^2)v^1$, and so
$\lambda_{\varsigma^{-1}v^{-1}}=\rho_{\varsigma^{-1}v^{-1}}=1_H$. Since $\phi_{\varsigma^{-1}v^{-1}}=
\phi_{v^{-1}}=1_H\otimes 1_H\otimes 1_H$ we conclude that for the invertible element
$\mathfrak{f}=\varsigma^{-1}v^{-1}$ of $H\otimes H$ the quasi-bialgebra $H_\mathfrak{f}$ is actually an ordinary
bialgebra, as needed.

The converse is immediate since if $\mathfrak{f}\in H\otimes H$ is invertible such that
$H_\mathfrak{f}$ is an ordinary bialgebra then, according to \coref{reconstructionHopf}, the
forgetful functor $\mathfrak{U}_\mathfrak{f}: {}_{H_\mathfrak{f}}\mathfrak{M}\rightarrow \mathfrak{M}$
is monoidal. Corroborated to the fact that $\Gamma(f): {}_H\mathfrak{M}\rightarrow {}_{H_\mathfrak{f}}\mathfrak{M}$
is a monoidal category isomorphism such that $\mathfrak{U}_\mathfrak{f}\circ \Gamma(\mathfrak{f})=\mathfrak{U}$
this leads us to the desired conclusion. So our proof is finished.
\end{proof}

We shall apply the above results to the group algebra $\Bbbk[G]$.
By \thref{reconstructionQuasi}, the monoidal structures on
${}_{\Bbbk[G]}\mathfrak{M}$ induced by that of $\mathfrak{M}$ are
given by the quasi-bialgebra structures on
$\Bbbk[G]$. We next see that these structures are built on the ordinary
bialgebra structure of the group algebra $\Bbbk[G]$, providing that $G$ is torsion-free
and abelian.

\begin{lemma}\lelabel{QuasiStrGroup}
Let $G$ be a torsion-free abelian group and $\Bbbk[G]$ the group algebra over the field $\Bbbk$ associated to it,
and endowed with the ordinary bialgebra structure, that is, endowed with the coalgebra structure given by
\[
\Delta(g)=g\otimes g,~\varepsilon(g)=1,
\]
for all $g\in G$, extended by linearity and as algebra morphisms.
Suppose that the group algebra $\Bbbk[G]$ admits a quasi-bialgebra structure given by
the comultplication $\widetilde{\Delta}$, counit $\widetilde{\varepsilon}$ and elements
$\phi$, $\lambda$ and $\rho$. Then $(\Bbbk[G],\widetilde{\Delta},\widetilde{\varepsilon})$ is a bialgebra
isomorphic to the ordinary bialgebra structure of $\Bbbk[G]$.
\end{lemma}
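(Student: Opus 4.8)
The plan is to exploit the commutativity of $\Bbbk[G]$ throughout. Since $G$ is abelian, the group algebra $\Bbbk[G]$ and all of its tensor powers $\Bbbk[G]^{\otimes n}\cong\Bbbk[G^n]$ are commutative. This single observation collapses the quasi-bialgebra axioms for $(\Bbbk[G],\widetilde{\Delta},\widetilde{\varepsilon},\phi,\lambda,\rho)$ into the axioms of an ordinary bialgebra: the conjugations by $\phi$, $\lambda$ and $\rho$ in the quasi-coassociativity and counit constraints become trivial, so that $(H\otimes\widetilde{\Delta})\widetilde{\Delta}=(\widetilde{\Delta}\otimes H)\widetilde{\Delta}$, while $(\widetilde{\varepsilon}\otimes H)\widetilde{\Delta}(h)=h$ and $(H\otimes\widetilde{\varepsilon})\widetilde{\Delta}(h)=h$ for all $h\in\Bbbk[G]$. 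Hence $(\Bbbk[G],\widetilde{\Delta},\widetilde{\varepsilon})$ is already a genuine bialgebra, and it remains only to identify it with the ordinary one.

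Next I would determine the values of $\widetilde{\Delta}$ and $\widetilde{\varepsilon}$ on group elements. As $\widetilde{\varepsilon}$ is an algebra map and every $g\in G$ is invertible, $\widetilde{\varepsilon}(g)\in\Bbbk\setminus\{0\}$ and $\widetilde{\varepsilon}|_G\colon G\to\Bbbk\setminus\{0\}$ is a character. Likewise $\widetilde{\Delta}$ is an algebra map, so $\widetilde{\Delta}(g)$ is an invertible element of $\Bbbk[G]\otimes\Bbbk[G]\cong\Bbbk[G^2]$. Here is where torsion-freeness enters: since $G^2$ is again torsion-free abelian, the group of units of $\Bbbk[G^2]$ is trivial (as recalled in the Introduction), so every such unit is a nonzero scalar multiple of an element of $G^2$. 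Thus $\widetilde{\Delta}(g)=q_g\, a_g\otimes b_g$ for some $q_g\in\Bbbk\setminus\{0\}$ and $a_g,b_g\in G$.

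I would then pin down this data using the (now ordinary) counit conditions. Applying $\widetilde{\varepsilon}\otimes H$ and $H\otimes\widetilde{\varepsilon}$ to $\widetilde{\Delta}(g)=q_g\,a_g\otimes b_g$ gives $q_g\widetilde{\varepsilon}(a_g)\,b_g=g$ and $q_g\widetilde{\varepsilon}(b_g)\,a_g=g$ in $\Bbbk[G]$. Comparing the underlying group elements and the scalar coefficients forces $a_g=b_g=g$ and $q_g=\widetilde{\varepsilon}(g)^{-1}$, so that $\widetilde{\Delta}(g)=\widetilde{\varepsilon}(g)^{-1}\,g\otimes g$ for every $g\in G$.

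Finally, I would produce the isomorphism explicitly. Define $\Xi\colon\Bbbk[G]\to\Bbbk[G]$ as the algebra map determined by $\Xi(g)=\widetilde{\varepsilon}(g)\,g$; it is an algebra automorphism precisely because $\widetilde{\varepsilon}|_G$ is a character. A direct check shows $\varepsilon\circ\Xi=\widetilde{\varepsilon}$ and $(\Xi\otimes\Xi)\circ\widetilde{\Delta}=\Delta\circ\Xi$, whence $\Xi$ is a bialgebra isomorphism from $(\Bbbk[G],\widetilde{\Delta},\widetilde{\varepsilon})$ onto the ordinary bialgebra $(\Bbbk[G],\Delta,\varepsilon)$. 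The main obstacle is really concentrated in the second step: once commutativity trivializes $\phi,\lambda,\rho$, all the content lies in the rigidity of $\widetilde{\Delta}$, which rests on the triviality of the units of $\Bbbk[G^2]$ --- the decisive consequence of $G$ being torsion-free. The remaining verifications are routine computations with grouplike-type elements.
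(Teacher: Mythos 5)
Your proposal is correct and follows essentially the same route as the paper: both arguments rest on the triviality of the unit group of $\Bbbk[G]\otimes\Bbbk[G]\cong\Bbbk[G\times G]$ (via torsion-freeness), use commutativity to reduce the counit axioms to $(\widetilde{\varepsilon}\otimes H)\widetilde{\Delta}(h)=h=(H\otimes\widetilde{\varepsilon})\widetilde{\Delta}(h)$, deduce $\widetilde{\Delta}(g)=\widetilde{\varepsilon}(g)^{-1}g\otimes g$, and exhibit the same isomorphism $g\mapsto\widetilde{\varepsilon}(g)g$. The only cosmetic difference is that you make the coassociativity observation explicit at the outset, which the paper leaves implicit.
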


\begin{proof}
First note that, by \cite[Corollary 2.5]{Gilmer-Heitmann}, if $G$ is a
torsion-free abelian group then the invertible elements in $\Bbbk \left[ G%
\right] $ are exactly those of the form $qh$ where $q\in \Bbbk \backslash
\left\{ 0\right\} $ and $h\in G.$ Now, since
\[
\Bbbk[G] \otimes \Bbbk[G]\ni h\otimes g\mapsto (h, g)\in \Bbbk[G\times G],
\]
extended by linearity, defines a bialgebra isomorphism and $G\times G$ is a
torsion-free abelian group as well, we deduce that the invertible elements in
$\Bbbk[G]\otimes \Bbbk[G]$ are of the form $qh\otimes g$ with
$q\in \Bbbk \backslash \left\{ 0\right\}$ and $h, g\in G$.

Suppose now that the group algebra $\Bbbk[G]$ admits a quasi-bialgebra structure given by
the comultplication $\widetilde{\Delta}$, counit $\widetilde{\varepsilon}$ and elements
$\phi$, $\lambda$ and $\rho$.
Denote $\Bbbk[G]$ with this quasi-bialgebra structure by $\widetilde{\Bbbk[G]}$.
Since $\widetilde{\Delta}$ is an algebra map, we get that $\widetilde{\Delta}\left(g\right)$ is invertible
in $\widetilde{\Bbbk[G]}\otimes \widetilde{\Bbbk[G]}$, so we can write $\widetilde{\Delta} \left(g\right)=q_gx\otimes y$
for some $q_g\in \Bbbk\backslash \left\{ 0\right\}$ and $x, y\in G$. From
\begin{equation*}
\left(\widetilde{\varepsilon}\otimes \widetilde{\Bbbk[G]}\right)\widetilde{\Delta}\left( h\right) =\lambda
^{-1}h\lambda ,\qquad \left(\widetilde{\Bbbk[G]}\otimes \widetilde{\varepsilon} \right)\widetilde{\Delta} \left(
h\right) =\rho ^{-1}h\rho ,\text{ for every }h\in \widetilde{\Bbbk[G]}
\end{equation*}%
and $\widetilde{\Bbbk[G]}$ commutative, we have that
\begin{equation*}
\left(\widetilde{\varepsilon} \otimes \widetilde{\Bbbk[G]}\right)\widetilde{\Delta} \left( h\right) =h,\qquad \left(
\widetilde{\Bbbk[G]}\otimes \widetilde{\varepsilon}\right)\widetilde{\Delta}\left( h\right) =h,\text{ for every }%
h\in \widetilde{\Bbbk[G]}.
\end{equation*}

Thus $\Delta $ is counital and we have $g=q_{g}\widetilde{\varepsilon} \left(x\right)y$, and hence
$y=q_g^{-1}\widetilde{\varepsilon}(x^{-1})g$. Similarly, from $g=q_{g}\widetilde{\varepsilon} \left(y\right)x$
we get $x=q_g^{-1}\widetilde{\varepsilon}(y^{-1})g$.
Summing up we deduce that
$\widetilde{\Delta}\left(g\right) =(q_{g}\widetilde{\varepsilon}(x)\widetilde{\varepsilon}(y))^{-1}g\otimes g
=\widetilde{\varepsilon}(g^{-1})g\otimes g$, for all $g\in G$. It is clear now that
$\widetilde{\Bbbk[G]}$ has actually an ordinary bialgebra structure, and that $\widetilde{\Bbbk[G]}\ni g\mapsto
\widetilde{\varepsilon}(g)g\in \Bbbk[G]$, extended by linearity, is a bialgebra isomorphism. So we are done.
\end{proof}

So if $G$ is a torsion-free abelian group then, up to an isomorphism, the quasi-bialgebra structures
on the group algebra $\Bbbk[G]$ are built on the ordinary
bialgebra structure (only $\phi,\lambda,\rho$ can be non-trivial) of $\Bbbk\left[G\right]$,
by considering it as a quasi-bialgebra via a so called Harrison $3$-cocycle, see for instance \cite{bct}.
Thus our problem reduces to the computation of $H^3_{\rm Harr}(\Bbbk\left[G\right], \Bbbk, \mathbb{G}_m)$. In the
sequel we will prove that this cohomology group is trivial. Actually, we will prove that
$H^n_{\rm Harr}(\Bbbk\left[G\right], \Bbbk, \mathbb{G}_m)$ is trivial for all $n\geq 2$ and $G$
a torsion free-abelian group.

First, we recall from \cite[\& 9.2]{Caenepeel98} the definition of the Harrison cohomology over a commutative
bialgebra over a field.

Let $H$ be a commutative $\Bbbk$-bialgebra and for $n\in \mathbb{N}$
denote by $H^{\otimes n}$ the tensor product over
$\Bbbk$ of $n$ copies of $H$. By convention $H^{\otimes 0}=\Bbbk$. For a fixed $n\in \mathbb{N}$ define the maps
$f_0, \cdots, f_{n+1}: H^{\otimes n}\rightarrow H^{\otimes n+1}$ given, for all $h_1, \cdots, h_n\in H$, by
\begin{eqnarray*}
&&f_0(h_1\otimes \cdots \otimes h_n)=1_H\otimes h_1\otimes \cdots \otimes h_n, \\
&&f_i(h_1\otimes \cdots \otimes h_i)=h_1\otimes \cdots h_{i-1}\otimes \Delta(h_i)\otimes h_{i+1}\otimes \cdots \otimes h_n,~
\mbox{for $i=1, \cdots, n$},\\
&&f_{n+1}(h_1\otimes \cdots \otimes h_n)=h_1\otimes \cdots \otimes h_n\otimes 1_H.
\end{eqnarray*}

Let $\mathbb{G}_m$ be the functor from the category of commutative $\Bbbk$-algebras to the category of
abelian groups that maps a commutative $\Bbbk$-algebra $A$ to its group of units, $\mathbb{G}_m(A)$.
At the level of morphisms $\mathbb{G}_m$ sends an algebra map $f: A\rightarrow B$ to its restriction and corestriction
at $\mathbb{G}_m(A)$ and $\mathbb{G}_m(B)$, respectively.

If we set $\delta_n=\prod\limits_{i=0}^{n+1}\mathbb{G}_m(f_i)^{(-1)^i}$, for all $n\in \mathbb{N}$, then we get a complex
\[
1\rightarrow \mathbb{G}_m(\Bbbk)\stackrel{\delta_0}{\rightarrow}\mathbb{G}_m(H)
\stackrel{\delta_1}{\rightarrow}\mathbb{G}_m(H^{\otimes 2})\stackrel{\delta_2}{\rightarrow}\cdots
\]
The cohomology groups associated to this complex are denoted by $H^n_{\rm Harr}(H, \Bbbk, \mathbb{G}_m)$, $n\geq 1$.
$H^n_{\rm Harr}(H, \Bbbk, \mathbb{G}_m)$ is called the $n$th Harrison cohomology group of $H$ with values in $\mathbb{G}_m$.

\begin{proposition}\prlabel{HarrisonTorsionFree}
Let $G$ be a torsion-free abelian group and $\Bbbk[G]$ the group algebra of $G$, endowed with the ordinary
bialgebra structure. Then $H^0_{\rm Harr}(\Bbbk[G], \Bbbk, \mathbb{G}_m)=\Bbbk\backslash \{0\}$,
$H^1_{\rm Harr}(\Bbbk[G], \Bbbk, \mathbb{G}_m)=G$ and $H^n_{\rm Harr}(\Bbbk[G], \Bbbk, \mathbb{G}_m)=1$, for
all $n\geq 2$.
\end{proposition}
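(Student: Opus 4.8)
The plan is to use the earlier unit computation to split the Harrison complex into a ``scalar part'' and a ``group part'', and then to handle each separately; essentially all the work lives in the group part. First I would identify $\Bbbk[G]^{\otimes n}$ with the group algebra $\Bbbk[G^n]$ via the bialgebra isomorphism already used in \leref{QuasiStrGroup}, so that $H^{\otimes n}=\Bbbk[G^n]$. Since $G^n$ is again torsion-free abelian, \cite[Corollary 2.5]{Gilmer-Heitmann} gives $\mathbb{G}_m(\Bbbk[G^n])=\{q\,g\mid q\in\Bbbk\backslash\{0\},\ g\in G^n\}$, which as a group is the internal direct product $(\Bbbk\backslash\{0\})\times G^n$ (a scalar $q\,e$ is a group element only if $q=1$). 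Each coface $f_i\colon\Bbbk[G^n]\to\Bbbk[G^{n+1}]$ is a unital algebra map, so on units it restricts to $\mathrm{id}_{\Bbbk\backslash\{0\}}\times f_i^G$, where $f_i^G\colon G^n\to G^{n+1}$ is the induced map on group elements: $f_0^G$ prepends the neutral element $e$, $f_i^G$ ($1\le i\le n$) repeats the $i$-th entry (because $\Delta(g)=g\otimes g$), and $f_{n+1}^G$ appends $e$. Hence $\delta_n=\mathrm{id}\times\delta_n^G$ respects the product, the whole complex splits as a direct product of the complex on $\Bbbk\backslash\{0\}$ and the complex $\big(G^\bullet,\delta^\bullet_G\big)$, and the cohomology splits accordingly.

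For the scalar part every $f_i$ fixes $\Bbbk\backslash\{0\}$ pointwise, so $\delta_n$ sends a scalar $q$ to $q\mapsto q^{\,\epsilon_n}$ with $\epsilon_n=\sum_{i=0}^{n+1}(-1)^i$, which is $0$ for $n$ even and $1$ for $n$ odd. The resulting complex $\Bbbk\backslash\{0\}\xrightarrow{1}\Bbbk\backslash\{0\}\xrightarrow{\mathrm{id}}\Bbbk\backslash\{0\}\xrightarrow{1}\cdots$ has cohomology $\Bbbk\backslash\{0\}$ in degree $0$ and trivial in every positive degree.

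The heart of the proof is the group part. Writing $g_0=g_{n+1}=e$, I would record the coordinatewise formula for $\delta_n^G=\prod_{i=0}^{n+1}(f_i^G)^{(-1)^i}$: the $k$-th component of $\delta_n^G(g_1,\dots,g_n)$ is $g_{k-1}g_k^{-1}$ when $k$ is odd and $n$ even, $g_{k-1}$ when $k$ and $n$ are both odd, $g_k$ when $k$ is even and $n$ odd, and $e$ when $k$ is even and $n$ even. With this formula kernels and images are read off directly. For $n$ odd one gets $\ker\delta_n^G=\{g\in G^n\mid g_{2j}=e\text{ for all }j\}$, which coincides exactly with $\mathrm{im}\,\delta_{n-1}^G$ (the image of the even differential has its odd slots ranging freely over $G$ and its even slots equal to $e$); for $n$ even one gets $\ker\delta_n^G=\{g\mid g_1=g_n=e,\ g_{2j}=g_{2j+1}\}$, which again coincides with $\mathrm{im}\,\delta_{n-1}^G$. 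Thus $H^n_G=1$ for all $n\ge 2$. In low degrees $\delta_0^G$ and $\delta_1^G$ are trivial (the $n=1$ formula gives $\delta_1^G(g)=(e,e)$), so $H^0_G=1$ and $H^1_G=G$. Feeding these into the product decomposition yields $H^0_{\rm Harr}=\Bbbk\backslash\{0\}$, $H^1_{\rm Harr}=G$, and $H^n_{\rm Harr}=1$ for $n\ge 2$.

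I expect the main obstacle to be the bookkeeping in the group part: deriving the coordinate formula correctly (the boundary conventions $g_0=g_{n+1}=e$ and the parity split are what make it usable) and then checking that kernel and image agree on the nose in each parity. I would emphasize that this matching is purely formal and valid for an arbitrary abelian group; torsion-freeness intervenes only once, to pin down the units of $\Bbbk[G^n]$ and thereby license the product splitting.
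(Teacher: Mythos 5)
Your proposal is correct and follows essentially the same route as the paper: both rest on the fact that $\mathbb{G}_m(\Bbbk[G]^{\otimes n})=(\Bbbk\backslash\{0\})G^n$ for $G$ torsion-free abelian, then write out the differentials explicitly and match kernels with images degree by degree (your parity-split coordinate formula reproduces exactly the paper's formulas for $\delta_{2n}$ and $\delta_{2n+1}$). The only difference is presentational: you factor the complex as a direct product of a scalar complex and a group complex, whereas the paper carries the scalar $q$ along inside the single formula; the substance of the verification is identical.
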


\begin{proof}
As in the proof of \leref{QuasiStrGroup}, inductively,
we obtain that the units of $\Bbbk[G]^{\otimes n}$ are of the form
$qx_1\otimes \cdots \otimes x_n$, for a certain
$q\in \Bbbk \backslash \left\{ 0\right\}$ and $x_1,\cdots, x_n\in G$.

Therefore, the complex that defines the Harrison cohomology of $\Bbbk[G]$ with
coefficients in $\mathbb{G}_m$ has
as spaces $\mathbb{G}_m(\Bbbk)=\Bbbk\backslash \{0\}$ and
$\mathbb{G}_m(\Bbbk[G]^{\otimes n})=(\Bbbk\backslash \{0\})\mathbb{G}^{\otimes n}$, $n\geq 1$, and boundary morphisms
given by
\begin{eqnarray*}
&&\delta_0(q)=1,~\delta_1(qh)=q1\otimes 1,~\delta_2(qh\otimes g)=h^{-1}\otimes 1\otimes g,\\
&&\delta_{2n}(qx_1\otimes x_2\otimes \cdots \otimes x_{2n})=x_1^{-1}\otimes 1\otimes
x_2x_3^{-1}\otimes 1\otimes \cdots \otimes x_{2n-2}x_{2n-1}^{-1}\otimes 1\otimes x_{2n},~\mbox{for $n\geq 2$},\\
&&\delta_{2n+1}(qx_1\otimes x_2\otimes \cdots\otimes x_{2n+1})=q1\otimes x_2\otimes x_2\otimes x_4\otimes x_4
\otimes \cdots \otimes x_{2n}\otimes x_{2n}\otimes 1,~\mbox{for $n\geq 1$},
\end{eqnarray*}
for all $q\in \Bbbk\backslash \{0\}$ and $h, g, x_1, \cdots, x_{2n+1}\in G$. We leave the verification
of these details to the reader. Notice only that we considered $G$
written multiplicatively and denoted by $1$ its neutral element.

Hence, the kernels and the images of the morphisms $\delta_i$, $i\geq 0$, are
\[
{\rm Ker}(\delta_0)=\Bbbk\backslash \{0\},~{\rm Im}(\delta_0)=1,~{\rm Ker}(\delta_1)=G,~
{\rm Ker}(\delta_2)={\rm Im}(\delta_1)=\Bbbk 1\otimes 1,
\]
and, for $n\geq 1$,
\begin{eqnarray*}
&&{\rm Ker}(\delta_{2n})={\rm Im}(\delta_{2n-1})=\{q1\otimes x_2\otimes x_2\otimes \cdots \otimes
x_{2n-2}\otimes x_{2n-2}\otimes 1\mid q\in \Bbbk\backslash \{0\}, x_{2i}\in G\},\\
&&{\rm Ker}(\delta_{2n+1})={\rm Im}(\delta_{2n})=\{x_1\otimes 1\otimes x_3\otimes \cdots
\otimes 1\otimes x_{2n+1}\mid x_{2i+1}\in G\}.
\end{eqnarray*}
From here we conclude that
$H^n_{\rm Harr}(\Bbbk[G], \Bbbk, \mathbb{G}_m)={\rm Ker}(\delta_n)/{\rm Im}(\delta_{n-1})=
\left\{\begin{array}{cl}
\Bbbk\backslash\{0\}&\mbox{if $n=0$}\\
G&\mbox{if $n=1$}\\
1&\mbox{if $n\geq 2$}
\end{array}\right.
$, as claimed.
\end{proof}

The computations made in the proof of \prref{HarrisonTorsionFree} allow us to determine
all the quasi-bialgebra structures on a group algebra of a torsion-free abelian group.

\begin{corollary}\label{ExplStrMon}
Let $\Bbbk$ be a field and $G$ a torsion-free abelian group. Then, up to an isomorphism,
to give a quasi-bialgebra structure on the group algebra $\Bbbk[G]$ is equivalent to give
an element of $(\Bbbk\backslash \{0\})\times G\times G$. More exactly, a quasi-bialgebra
structure on $\Bbbk[G]$ is given by the ordinary bialgebra structure of $\Bbbk[G]$,
\begin{equation}\eqlabel{QuasiStr}
\phi^{h, g}=h\otimes 1\otimes g,~\lambda=qg^{-1}~\mbox{and}~\rho=qh,
\end{equation}
for a certain triple $(q, h, g)\in (\Bbbk \backslash \{0\})\times G\times G$.
Furthermore, the only ordinary quasi-bialgebra structure that can be built on the ordinary
bialgebra structure of $\Bbbk[G]$ is the trivial one, in the sense
that it coincides with the ordinary bialgebra structure of $\Bbbk[G]$.
\end{corollary}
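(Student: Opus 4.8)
The plan is to combine \leref{QuasiStrGroup} with the explicit description of the Harrison cocycles obtained inside the proof of \prref{HarrisonTorsionFree}. Write $H=\Bbbk[G]$. First I would use \leref{QuasiStrGroup} to replace, up to an isomorphism, any given quasi-bialgebra structure by one whose underlying bialgebra $(H,\Delta,\varepsilon)$ is the ordinary one; it then remains only to determine which data $(\phi,\lambda,\rho)$ turn the ordinary bialgebra into a quasi-bialgebra. Because $H$ is commutative and the ordinary $\Delta$ is cocommutative, I would observe that the quasi-coassociativity identity and the two counit identities of \deref{quasi-bialgebra} are automatically satisfied: on a group-like $g$ both sides of $(H\otimes\Delta)\Delta(g)=\phi\cdot(\Delta\otimes H)\Delta(g)\cdot\phi^{-1}$ equal $g\otimes g\otimes g$, while both sides of $(\varepsilon\otimes H)\Delta(g)=\lambda^{-1}g\lambda$ and $(H\otimes\varepsilon)\Delta(g)=\rho^{-1}g\rho$ equal $g$, by commutativity; the general case follows by linearity. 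Hence the only genuine requirements are that $\lambda,\rho$ be invertible and that $\phi$ be an invertible counital $3$-cocycle.

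The crucial point is that the $3$-cocycle identity for $\phi$ in \deref{quasi-bialgebra} is nothing but the equation $\delta_3(\phi)=1$ of the Harrison complex. Indeed, after using commutativity of $H^{\otimes 4}$ to reorder the factors, that identity becomes $f_0(\phi)f_2(\phi)f_4(\phi)=f_1(\phi)f_3(\phi)$, where $f_0(\phi)=1\otimes\phi$, $f_4(\phi)=\phi\otimes 1$, and $f_1,f_2,f_3$ apply $\Delta$ in the first, second, third slot; this is precisely $\delta_3(\phi)=1$. I would then invoke the computation in the proof of \prref{HarrisonTorsionFree}: the case $n=1$ of the formula there reads $\delta_3(q\,x_1\otimes x_2\otimes x_3)=q\,(1\otimes x_2\otimes x_2\otimes 1)$, so an invertible element $\phi=q\,a\otimes b\otimes c$ is a $3$-cocycle if and only if $q=1$ and $b=1$. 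Thus $\phi=h\otimes 1\otimes g$ for uniquely determined $h,g\in G$.

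Next I would substitute this $\phi$ into the counital cocycle condition $(H\otimes\varepsilon\otimes H)(\phi)=\rho\otimes\lambda^{-1}$. Since $(H\otimes\varepsilon\otimes H)(h\otimes 1\otimes g)=h\otimes g$, this becomes $\rho\otimes\lambda^{-1}=h\otimes g$, and because both sides are simple tensors there is a unique scalar $q\in\Bbbk\backslash\{0\}$ with $\rho=qh$ and $\lambda^{-1}=q^{-1}g$, i.e. $\lambda=qg^{-1}$. This yields the parametrization by $(q,h,g)$, and it is a genuine bijection since $\phi$ recovers $(h,g)$ and then $\rho$ recovers $q$. For the converse I would check, for an arbitrary triple, that the data \equref{QuasiStr} satisfy every axiom: $\phi,\lambda,\rho$ are invertible, $\phi$ is a cocycle by the computation above, the counital identity holds since $qh\otimes q^{-1}g=h\otimes g$, and the remaining identities are automatic by commutativity; this is the routine verification I would leave to the reader.

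Finally, for the last assertion I would impose $\lambda=\rho=1_H$. Uniqueness of the scalar-times-group-element form turns $\rho=qh=1_H$ and $\lambda=qg^{-1}=1_H$ into $q=1$ and $h=g=1_G$, so $\phi=1\otimes 1\otimes 1$ is trivial and the quasi-bialgebra coincides with the ordinary bialgebra. I expect the only real obstacle to be the bookkeeping in the second paragraph: correctly matching the (multiplicatively written, reordered) quasi-bialgebra cocycle identity with the Harrison differential $\delta_3$, and then matching the counital identity with the free scalar $q$. Everything else is forced by commutativity of $\Bbbk[G]$ together with the description of the unit groups of $\Bbbk[G]^{\otimes n}$ already established.
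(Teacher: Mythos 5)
Your proposal is correct and follows essentially the same route as the paper: reduce via \leref{QuasiStrGroup} to the ordinary bialgebra structure, use the computation of $\mathrm{Ker}(\delta_3)$ from the proof of \prref{HarrisonTorsionFree} to force $\phi=h\otimes 1\otimes g$, and read off $\rho=qh$, $\lambda=qg^{-1}$ from the counital condition. The only difference is that you spell out explicitly why the quasi-coassociativity and counit axioms are automatic and why the quasi-bialgebra cocycle identity coincides with $\delta_3(\phi)=1$, details the paper delegates to the remarks following \leref{QuasiStrGroup} and the reference to \cite{bct}.
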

\begin{proof}
By the comments made after \leref{QuasiStrGroup} we have that, up to an isomorphism,
the quasi-bialgebra structures built on the group algebra $\Bbbk[G]$ are in a one to one correspondence with
the Harrison $3$-cocycles on the ordinary bialgebra $\Bbbk[G]$,
with coefficients in $\mathbb{G}_m$. Since an element
of ${\rm Ker}(\delta_3)$ is of the form $h\otimes 1\otimes g$, for some
$h, g\in G$, we deduce that the desired quasi-bialgebra structures are completely determined by
$\phi^{h, g}:=h\otimes 1\otimes g$ and $\lambda, \rho\in (\Bbbk\backslash \{0\})G$ such that
$(H\otimes \varepsilon\otimes H)(\phi^{h, g})=\rho\otimes \lambda^{-1}$. The latest condition
is clearly equivalent to the existence of a nonzero scalar $q$ such that $\rho=qh$ and $\lambda=qg^{-1}$.
The converse is obvious: for any triple $(q, h, g)\in (\Bbbk \backslash \{0\})\times G\times G$
and $\phi^{h, g}$, $\lambda$ and $\rho$ as in \equref{QuasiStr} we have that
$(\Bbbk[G], \phi^{h, g}, \lambda, \rho)$ is a quasi-bialgebra.

Now, the ordinary quasi-bialgebra structures built on the algebra
structure of $\Bbbk[G]$ are those for which $\rho=\lambda=1$. This forces $q=1$ and
$h=g=1$, and therefore we land at the ordinary bialgebra structure of $\Bbbk[G]$.
\end{proof}

Observe that for $\Bbbk[G]^{h, g}_q:=(\Bbbk[G], \phi^{h, g}, \lambda, \rho)$ as in \equref{QuasiStr}
the invertible element $u$ that deforms this quasi-bialgebra structure in an ordinary one is
$u=qh\otimes g^{-1}$. A simple inspection shows that $(\Bbbk[G]^{h, g}_q)_u=\Bbbk[G]$, and so
$\Bbbk[G]^{h, g}_q=\Bbbk[G]_{u^{-1}}$ is a deformation by an invertible element of the ordinary
bialgebra structure of $\Bbbk[G]$. In particular this implies the following result.

\begin{corollary}\label{MonCatEquiv}
Let $G$ be a torsion-free abelian group and $\Bbbk[G]$ the group algebra over $\Bbbk$ associated to
$G$. Then any monoidal structure on the category ${}_{\Bbbk[G]}\mathfrak{M}$ induced by that of $\mathfrak{M}$
is monoidal isomorphic to the strict monoidal category of left representations over the ordinary bialgebra $\Bbbk[G]$.
\end{corollary}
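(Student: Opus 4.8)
The plan is to leverage the structural results already established, particularly the deformation observation made immediately before the statement. The core idea is that Corollary \ref{ExplStrMon} together with the inspection preceding this corollary tells us that every quasi-bialgebra structure on $\Bbbk[G]$ is, up to isomorphism, of the form $\Bbbk[G]^{h, g}_q = \Bbbk[G]_{u^{-1}}$ where $u = qh\otimes g^{-1}$ is an invertible element of $\Bbbk[G]\otimes \Bbbk[G]$. Thus the quasi-bialgebra $\Bbbk[G]^{h,g}_q$ is obtained from the ordinary bialgebra $\Bbbk[G]$ by a twist with $u^{-1}$.

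First I would recall from \thref{reconstructionQuasi} that any monoidal structure on ${}_{\Bbbk[G]}\mathfrak{M}$ induced by that of $\mathfrak{M}$ corresponds to a quasi-bialgebra structure on $\Bbbk[G]$. By \leref{QuasiStrGroup}, up to a bialgebra isomorphism, the underlying coalgebra structure is forced to be the ordinary one, so such a quasi-bialgebra is isomorphic to one of the form $\Bbbk[G]^{h, g}_q$ described in Corollary \ref{ExplStrMon}. Next I would invoke the deformation identity $\Bbbk[G]^{h, g}_q = \Bbbk[G]_{u^{-1}}$ with $u = qh\otimes g^{-1}$, which is exactly the content of the remark preceding the statement.

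The final and decisive step uses the monoidal category isomorphism coming from twisting. Recall from the preliminaries that for any invertible $\alpha\in H\otimes H$, the identity functor induces a monoidal category isomorphism $\Gamma(\alpha): {}_H\mathfrak{M}\rightarrow {}_{H_\alpha}\mathfrak{M}$. Applying this with $H=\Bbbk[G]$ (the ordinary bialgebra) and $\alpha = u^{-1}$ gives a monoidal isomorphism between ${}_{\Bbbk[G]}\mathfrak{M}$ (with the strict monoidal structure of the ordinary bialgebra) and ${}_{\Bbbk[G]_{u^{-1}}}\mathfrak{M} = {}_{\Bbbk[G]^{h,g}_q}\mathfrak{M}$. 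Since any given induced monoidal structure is realized by some $\Bbbk[G]^{h,g}_q$ up to isomorphism, chaining these isomorphisms yields that every such structure is monoidal isomorphic to the strict monoidal category of representations over the ordinary bialgebra $\Bbbk[G]$.

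I do not expect a serious obstacle here, as the corollary is essentially a repackaging of already-proven facts; the only care needed is bookkeeping, namely correctly matching the deformation $\alpha = u^{-1}$ with the direction of the functor $\Gamma$ and ensuring the ordinary bialgebra structure is the one carrying the genuinely strict monoidal structure (so that $\Gamma$ lands where claimed). The mild subtlety is that the isomorphism of \leref{QuasiStrGroup} must be composed with the twisting isomorphism $\Gamma(u^{-1})$; but since a composition of monoidal isomorphisms is again a monoidal isomorphism, this presents no difficulty and the conclusion follows.
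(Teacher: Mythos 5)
Your proposal is correct and follows essentially the same route as the paper's own proof: Theorem~\ref{th:reconstructionQuasi} plus Corollary~\ref{ExplStrMon} to reduce to a quasi-bialgebra of the form $\Bbbk[G]^{h,g}_q=\Bbbk[G]_{u^{-1}}$ with $u=qh\otimes g^{-1}$, and then the twisting isomorphism $\Gamma$ (the paper uses $\Gamma(u)$ in the direction ${}_{\Bbbk[G]^{h,g}_q}\mathfrak{M}\rightarrow{}_{\Bbbk[G]}\mathfrak{M}$, which is just the inverse of your $\Gamma(u^{-1})$).
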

\begin{proof}
By \thref{reconstructionQuasi}, any  monoidal structure on the category
${}_{\Bbbk[G]}\mathfrak{M}$ induced by that of $\mathfrak{M}$ is determined by a quasi-bialgebra structure on $\Bbbk[G]$.
By Corollary \ref{ExplStrMon}, up to isomorphism, these structures are of the form $\Bbbk[G]^{h, g}_q$ as above.
Since $(\Bbbk[G]^{h, g}_q)_u=\Bbbk[G]$ for $u=qh\otimes g^{-1}$, we get a monoidal category isomorphism $%
\Gamma \left( u \right) :{_{\Bbbk[G]^{h, g}_q}}\mathfrak{M}\rightarrow {_{\Bbbk[G]}}\mathfrak{M}$.
\end{proof}

A first example of torsion-free abelian group is $\mathbb{Z}$, the group of integers.
In the following we will adopt the multiplicative notation $\left\langle
g\right\rangle $ for the group $\mathbb{Z}$, where $g$ is a generator.

\begin{theorem}\thlabel{teo:LaurentQuasi}
Let $\left( \Bbbk [\left\langle g\right\rangle],\Delta ,\varepsilon ,\phi ,\lambda ,\rho \right) $
be a quasi-bialgebra structure on the group algebra $\Bbbk [\left\langle g\right\rangle]$. Then,
up to an isomorphism, the quasi-bialgebra structure of $\Bbbk [\langle g\rangle]$ is completely determined by
some fixed elements $q\in \Bbbk \backslash \left\{ 0\right\}$ and $a, b\in \mathbb{Z}$,
in the sense that
\begin{eqnarray*}
\Delta \left( g\right) &=&g\otimes g,\qquad \varepsilon \left( g\right)=1, \\
\phi &=&g^{a}\otimes 1_{H}\otimes g^{b}, \\
\lambda &=&qg^{-b},\qquad \rho =qg^{a}.
\end{eqnarray*}
Furthermore, if we denote this quasi-bialgebra structure on $\Bbbk[\langle g\rangle]$
by $\Bbbk[\langle g\rangle]^{a, b}_q$ then
$\Bbbk[\langle g\rangle]^{a, b}_q=\Bbbk[\langle g\rangle]_{q^{-1}g^{-a}\otimes g^{b}}$.
Consequently, up to a monoidal category isomorphism, there is only one monoidal
structure on the category of left representations over the group algebra $\Bbbk[\langle g\rangle]$
that is induced by the strict monoidal structure of $\mathfrak{M}$. Namely, the one corresponding to the
ordinary bialgebra structure of $\Bbbk[\langle g\rangle]$.
\end{theorem}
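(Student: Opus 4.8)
The plan is to specialize the general classification already established for torsion-free abelian groups to the case $G=\langle g\rangle\cong\mathbb{Z}$. The key observation is that $\mathbb{Z}$ is itself a torsion-free abelian group, so \coref{ExplStrMon} applies directly. Thus, up to an isomorphism, every quasi-bialgebra structure on $\Bbbk[\langle g\rangle]$ is built on the ordinary bialgebra structure (giving $\Delta(g)=g\otimes g$ and $\varepsilon(g)=1$) and is completely determined by a triple $(q,h,k)\in(\Bbbk\backslash\{0\})\times\langle g\rangle\times\langle g\rangle$ via $\phi^{h,k}=h\otimes 1_H\otimes k$, $\lambda=qk^{-1}$ and $\rho=qh$. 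The only remaining step is to translate the two group elements $h,k\in\langle g\rangle$ into integer exponents: since $\langle g\rangle$ is the infinite cyclic group generated by $g$, we may write $h=g^{a}$ and $k=g^{b}$ for uniquely determined $a,b\in\mathbb{Z}$. Substituting these into the formulas from \coref{ExplStrMon} yields exactly the displayed expressions for $\phi$, $\lambda$ and $\rho$, with the notation $\Bbbk[\langle g\rangle]^{a,b}_q$ replacing $\Bbbk[G]^{h,g}_q$.

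For the deformation claim, I would invoke the observation recorded immediately after \coref{ExplStrMon}: for $\Bbbk[G]^{h,k}_q$ the invertible element deforming it into the ordinary bialgebra is $u=qh\otimes k^{-1}$, so that $\Bbbk[G]^{h,k}_q=\Bbbk[G]_{u^{-1}}$ with $u^{-1}=q^{-1}h^{-1}\otimes k$. Rewriting $h=g^{a}$ and $k=g^{b}$ gives $u^{-1}=q^{-1}g^{-a}\otimes g^{b}$, which is precisely the asserted equality $\Bbbk[\langle g\rangle]^{a,b}_q=\Bbbk[\langle g\rangle]_{q^{-1}g^{-a}\otimes g^{b}}$. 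I would verify directly, or simply cite the general computation of $\Gamma(u)$, that $(\Bbbk[\langle g\rangle]^{a,b}_q)_u=\Bbbk[\langle g\rangle]$, confirming that the deformation indeed lands on the ordinary bialgebra.

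Finally, for the monoidal uniqueness statement, the strategy is to combine \thref{reconstructionQuasi} with \coref{MonCatEquiv}. By \thref{reconstructionQuasi}, any monoidal structure on ${}_{\Bbbk[\langle g\rangle]}\mathfrak{M}$ induced by that of $\mathfrak{M}$ corresponds to a quasi-bialgebra structure on $\Bbbk[\langle g\rangle]$, hence to one of the $\Bbbk[\langle g\rangle]^{a,b}_q$ by the first part. Since $(\Bbbk[\langle g\rangle]^{a,b}_q)_u=\Bbbk[\langle g\rangle]$ for the invertible $u$ above, the functor $\Gamma(u):{}_{\Bbbk[\langle g\rangle]^{a,b}_q}\mathfrak{M}\rightarrow{}_{\Bbbk[\langle g\rangle]}\mathfrak{M}$ is a monoidal category isomorphism onto the strict monoidal category of representations over the ordinary bialgebra $\Bbbk[\langle g\rangle]$. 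This is exactly the content of \coref{MonCatEquiv} specialized to $G=\langle g\rangle$, so no new argument is needed.

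I do not anticipate any genuine obstacle here: the statement is essentially a corollary of the already-proven \coref{ExplStrMon} and \coref{MonCatEquiv}, and the only work is the bookkeeping of replacing abstract elements $h,k\in\langle g\rangle$ by their integer exponents $a,b$. The one point requiring a line of care is confirming that this exponent assignment is well-defined and bijective, which is immediate since $\langle g\rangle$ is free abelian of rank one; everything else is substitution into formulas established earlier in the excerpt.
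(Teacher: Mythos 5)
Your proposal is correct and follows exactly the paper's own (one-line) proof: the paper also derives this theorem by specializing Corollaries \ref{ExplStrMon} and \ref{MonCatEquiv} to $G=\mathbb{Z}$, together with the remark after Corollary \ref{ExplStrMon} identifying the deforming element $u=qh\otimes g^{-1}$. Your additional bookkeeping of writing $h=g^a$, $k=g^b$ is the only content beyond the citations, and it is carried out correctly.
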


\begin{proof}
Follows from Corollaries \ref{ExplStrMon} and \ref{MonCatEquiv}, specialized for the case $G=\mathbb{Z}$.
\end{proof}

We move now to the quasi-triangular case. We prove that there is exactly one
braided monoidal structure (actually symmetric) on the category of representation of
a group algebra associated to a torsion-free abelian group.

\begin{proposition}\prlabel{pro:triang}
Let $G$ be a torsion-free abelian group, $q\in \Bbbk\backslash \{0\}$ and $h, g\in G$.
If $\Bbbk[G]^{h, g}_q$ is the group algebra $\Bbbk[G]$ equipped with the quasi-bialgebra
structure from Corollary \ref{ExplStrMon} then $R^{h, g}=gh\otimes (gh)^{-1}$ is the only
matrix that makes $\Bbbk[G]^{h, g}_q$ a quasi-triangular (actually triangular) quasi-bialgebra.
Moreover, $(\Bbbk[G]^{h, g}_q, R^{h, g})=(\Bbbk[G], 1\otimes 1)_{q^{-1}h^{-1}\otimes g}$,
as triangular quasi-bialgebras.
\end{proposition}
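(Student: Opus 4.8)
The plan is to obtain existence together with the final identification for free from the deformation machinery, and then to pin down uniqueness by feeding the general form of a unit of $\Bbbk[G]\otimes\Bbbk[G]$ into the quasi-triangularity axioms.

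First I would treat existence. By the remarks following Corollary~\ref{ExplStrMon}, as quasi-bialgebras one has $\Bbbk[G]^{h,g}_q=\Bbbk[G]_{\alpha}$ with $\alpha=q^{-1}h^{-1}\otimes g$ (the inverse of $u=qh\otimes g^{-1}$). The undeformed datum $(\Bbbk[G],1\otimes 1)$ is triangular: with trivial $\phi$ both axioms \eqref{form:quasi1} and \eqref{form:quasi2} read $(\Delta\otimes H)(1\otimes 1)=1\otimes 1\otimes 1=(H\otimes\Delta)(1\otimes 1)$, axiom \eqref{form:quasi3} holds since $\Bbbk[G]$ is cocommutative, and $R^2\otimes R^1=1\otimes 1=R^{-1}$ is \eqref{form:triang}. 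By the deformation principle recalled at the end of Section~\ref{preliminares}, $(\Bbbk[G],1\otimes 1)_{\alpha}$ is then triangular with $R_{\alpha}=(\alpha^2\otimes\alpha^1)(1\otimes 1)\alpha^{-1}$. Plugging in $\alpha^1=q^{-1}h^{-1}$, $\alpha^2=g$ and $\alpha^{-1}=qh\otimes g^{-1}$, the scalar $q$ cancels and commutativity of $G$ yields $R_{\alpha}=gh\otimes(gh)^{-1}=R^{h,g}$. This single computation both exhibits $R^{h,g}$ as a triangular structure on $\Bbbk[G]^{h,g}_q$ and establishes the asserted equality $(\Bbbk[G]^{h,g}_q,R^{h,g})=(\Bbbk[G],1\otimes 1)_{q^{-1}h^{-1}\otimes g}$ of triangular quasi-bialgebras.

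Next I would settle uniqueness. Let $R$ be any invertible element making $\Bbbk[G]^{h,g}_q$ quasi-triangular. As in the proof of \leref{QuasiStrGroup}, every unit of $\Bbbk[G]\otimes\Bbbk[G]$ has the shape $r\,x\otimes y$ with $r\in\Bbbk\setminus\{0\}$ and $x,y\in G$, so write $R=r\,x\otimes y$. Axiom \eqref{form:quasi3} is vacuous here because $\Bbbk[G]$ is commutative, so only \eqref{form:quasi1} and \eqref{form:quasi2} remain. Since $\phi^{h,g}=h\otimes 1\otimes g$ is a monomial and every entry lives in the abelian group $G$, both sides of each axiom are single tensors and the axioms reduce to matching scalars and collecting exponents slot by slot. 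Comparing $(\Delta\otimes H)(R)=r\,x\otimes x\otimes y$ with the right-hand side of \eqref{form:quasi1} forces $r=1$ and $y=(gh)^{-1}$; comparing $(H\otimes\Delta)(R)=x\otimes y\otimes y$ with the right-hand side of \eqref{form:quasi2} then forces $x=gh$. Hence $R=gh\otimes(gh)^{-1}=R^{h,g}$, and the triangular structure is unique.

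I expect the only real labor to be the monomial arithmetic in the uniqueness step, where one must push the three slots of $\phi^{h,g}$, of $(\phi^{h,g})^{-1}$ and of their various permutations through the products in \eqref{form:quasi1} and \eqref{form:quasi2}. Because $G$ is abelian this is bookkeeping rather than a genuine obstacle: each tensor slot is governed by an independent equation in $G$. It is worth noting that $q$ never appears in \eqref{form:quasi1}--\eqref{form:quasi3} (these axioms involve only $\phi^{h,g}$, $R$ and $\Delta$, not $\lambda$ or $\rho$), which dovetails with the fact that $q$ is carried entirely by the deformation element and leaves $R^{h,g}$ untouched.
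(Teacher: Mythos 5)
Your proposal is correct; I checked the slot-by-slot computation in your uniqueness step and it does come out as you claim (from \equref{quasi1} one gets $r\,x\otimes x\otimes y=r^2\,x\otimes x\otimes y^2hg$, forcing $r=1$ and $y=(gh)^{-1}$, and from \equref{quasi2} one gets $x=x^2(gh)^{-1}$, forcing $x=gh$). The existence half and the identification $(\Bbbk[G]^{h,g}_q,R^{h,g})=(\Bbbk[G],1\otimes 1)_{q^{-1}h^{-1}\otimes g}$ are obtained exactly as in the paper, by twisting the trivial triangular structure. Where you diverge is in the uniqueness step: the paper first observes that $R\mapsto R_u$ and $R\mapsto R_{u^{-1}}$ give a bijection between $R$-matrices on $\Bbbk[G]^{h,g}_q$ and on the undeformed bialgebra $\Bbbk[G]$, and then solves the quasi-triangularity equations only in the untwisted case, where $\phi=1\otimes1\otimes1$ and the axioms collapse to $tx\otimes x\otimes y=t^2x\otimes x\otimes y^2$ and $tx\otimes y\otimes y=t^2x^2\otimes y\otimes y$, giving $R=1\otimes1$; you instead work directly in the twisted object with $\phi^{h,g}=h\otimes 1\otimes g$. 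Both routes rest on the same two ingredients (the monomial form of units of $\Bbbk[G]\otimes\Bbbk[G]$ and the deformation identity $\Bbbk[G]^{h,g}_q=\Bbbk[G]_{u^{-1}}$); the paper's reduction buys shorter equations and makes the twisting bijection explicit, while yours avoids invoking that bijection for uniqueness at the cost of heavier bookkeeping. One small imprecision: you say \equref{quasi3} is vacuous ``because $\Bbbk[G]$ is commutative''; you need cocommutativity to get $\Delta^{cop}=\Delta$ and commutativity to get $R\Delta(h)R^{-1}=\Delta(h)$, so both properties are used, as the paper notes.
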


\begin{proof}
If $u=qh\otimes g^{-1}$ then we have seen that $\Bbbk[G]^{h, g}_q=\Bbbk[G]_{u^{-1}}$. Thus, if
$\widetilde{R}\in \Bbbk[G]^{h, g}_q\otimes \Bbbk[G]^{h, g}_q$ endows $\Bbbk[G]^{h, g}_q$ with a
quasi-triangular structure then $\widetilde{R}_u$ is an $R$-matrix for
$(\Bbbk[G]^{h, g}_q)_u=\Bbbk[G]$. Likewise, if $R$ is an $R$-matrix on $\Bbbk[G]$ then
$R_{u^{-1}}$ defines a quasi-triangular structure on $\Bbbk[G]_{u^{-1}}=\Bbbk[G]^{h, g}_q$.
So we have to compute the quasi-triangular structures of the ordinary bialgbra $\Bbbk[G]$.

The definition of a quasi-triangular bialgebra can be obtained from that of a quasi-bialgebra
by considering $\phi=1\otimes 1\otimes 1$. So we are looking for an
invertible element $R\in \Bbbk[G]\otimes \Bbbk[G]$ such that (\ref{form:quasi3}) holds and
\[
(\Delta\otimes \Bbbk[G])(R)=(R^1\otimes 1\otimes R^2)(1\otimes R^1\otimes R^2)~,~
(\Bbbk[G]\otimes \Delta)(R)=(R^1\otimes 1\otimes R^2)(R^1\otimes R^2\otimes 1).
\]
Note that, since $\Bbbk[G]$ is both commutative and cocommutative,
equation (\ref{form:quasi3}) is always true. Since $R$ is invertible, it is of the form
$R=tx\otimes y$ for some $t\in \Bbbk \backslash \left\{ 0\right\}$
and $x, y\in G$. So we have
\[
tx\otimes x\otimes y=t^2x\otimes x\otimes y^2~\mbox{and}~tx\otimes y\otimes y=t^2x^2\otimes y\otimes y.
\]
The above equalities imply $R=1\otimes 1$, thus the bialgebra $\Bbbk[G]$ admits only one $R$-matrix,
the trivial one. From the above we get that $\Bbbk[G]^{h, g}_q$ has a unique quasi-triangular
(actually triangular) structure given by
\[
R^{h, g}:=R_{u^{-1}}=(q^{-1}g\otimes h^{-1})(qh\otimes g^{-1})=gh\otimes (gh)^{-1}.
\]
It is clear that $(\Bbbk[G]^{h, g}_q, R^{h, g})=(\Bbbk[G], 1\otimes 1)_{q^{-1}h^{-1}\otimes g}$,
as triangular quasi-bialgebras, and this finishes the proof.
\end{proof}

\begin{notation}
Consider the quasi-bialgebra $\Bbbk[G]_{q}^{h, g}$. In view of \prref{pro:triang},
there is a unique element $R$, namely $R^{h, g}=gh\otimes (gh)^{-1}$, such that
$\left(\Bbbk[G] _{q}^{h, g}, R\right)$ is a quasi-triangular
(in fact triangular) quasi-bialgebra. By abuse of notation, the datum $\left(
\Bbbk[G] _{q}^{h ,g},R^{h, g}\right)$ will be simply denoted by
$\Bbbk[G]_{q}^{h ,g}$.
\end{notation}

From the braided monoidal categorical point of view, up to isomorphism,
$\Bbbk[G]_{q}^{h ,g}$ is the ``unique" (quasi)triangular
quasi-bialgebra structure that can be built on the group algebra $\Bbbk[G]$, in the
case when $G$ is a torsion-free abelian group.

\begin{corollary}\label{coro:kgequiv}
Let $G$ be a torsion-free abelian group. Then, up to a braided monoidal category isomorphism,
we have a unique braided monoidal structure (actually symmetric) on the category of
representations over the group algebra $\Bbbk[G]$, considered monoidal via a structure induced by that of
$\mathfrak{M}$. Namely, the one induced by the trivial (quasi)triangular
structure of the ordinary bialgebra $\Bbbk[G]$.
\end{corollary}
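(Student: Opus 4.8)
The plan is to deduce Corollary~\ref{coro:kgequiv} from \prref{pro:triang} in exactly the same way that Corollary~\ref{MonCatEquiv} was deduced in the purely monoidal setting, but now keeping track of the braiding. By \thref{reconstructionQuasi} together with the discussion following the definition of quasi-triangularity, a braided monoidal structure on ${}_{\Bbbk[G]}\mathfrak{M}$ induced by that of $\mathfrak{M}$ corresponds to a quasi-triangular quasi-bialgebra structure $(\Bbbk[G],\phi,\lambda,\rho,R)$ on the group algebra. By Corollary~\ref{ExplStrMon}, up to isomorphism the underlying quasi-bialgebra is one of the $\Bbbk[G]^{h,g}_q$, and by \prref{pro:triang} the only $R$-matrix making it quasi-triangular is $R^{h,g}=gh\otimes(gh)^{-1}$, which moreover is triangular. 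Hence every such braided structure is in fact symmetric and is captured by the datum $(\Bbbk[G]^{h,g}_q,R^{h,g})$.

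First I would recall the key identity established in \prref{pro:triang}, namely that
\[
(\Bbbk[G]^{h, g}_q, R^{h, g})=(\Bbbk[G], 1\otimes 1)_{q^{-1}h^{-1}\otimes g}
\]
as triangular quasi-bialgebras, where $(\Bbbk[G],1\otimes 1)$ denotes the ordinary bialgebra $\Bbbk[G]$ equipped with the trivial $R$-matrix. Setting $\alpha=q^{-1}h^{-1}\otimes g$, this says precisely that the triangular quasi-bialgebra $(\Bbbk[G]^{h,g}_q,R^{h,g})$ is obtained from the trivial triangular bialgebra by deformation along the invertible element $\alpha\in\Bbbk[G]\otimes\Bbbk[G]$. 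I would then invoke the general fact, recorded in the Preliminaries right after \equref{form:triang}, that deformation by an invertible $\alpha$ sends a (quasi-)triangular structure to a (quasi-)triangular one and that $\Gamma(\alpha)=(\mathrm{Id},\alpha_0,\alpha_2)$ induces a \emph{braided} (here symmetric) monoidal category isomorphism ${}_H\mathfrak{M}\rightarrow{}_{H_\alpha}\mathfrak{M}$.

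Concretely, taking $u=qh\otimes g^{-1}=\alpha^{-1}$, the monoidal isomorphism $\Gamma(u):{}_{\Bbbk[G]^{h,g}_q}\mathfrak{M}\rightarrow{}_{\Bbbk[G]}\mathfrak{M}$ already used in Corollary~\ref{MonCatEquiv} upgrades to a symmetric monoidal isomorphism once we equip both categories with the braidings coming from $R^{h,g}$ and from $1\otimes 1$ respectively; this is exactly the content of the displayed formula for $c^{H_\alpha}_{H,H}$ in the Preliminaries, which shows $\Gamma(\alpha)$ transports the braiding of ${}_H\mathfrak{M}$ to that of ${}_{H_\alpha}\mathfrak{M}$. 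Therefore every braided monoidal structure on ${}_{\Bbbk[G]}\mathfrak{M}$ induced by $\mathfrak{M}$ is symmetric and is isomorphic, as a symmetric monoidal category, to ${}_{\Bbbk[G]}\mathfrak{M}$ with the braiding induced by the trivial triangular structure of the ordinary bialgebra $\Bbbk[G]$, which is the claim.

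The only point requiring care — the one I would treat as the main obstacle — is verifying that $\Gamma(u)$ is compatible with the braidings and not merely with the monoidal data, i.e.\ that it is an isomorphism of \emph{braided} (symmetric) monoidal categories. This is not automatic from Corollary~\ref{MonCatEquiv}, but it is supplied by the Preliminaries: the computation there yields $R_\alpha=(\alpha^2\otimes\alpha^1)R\alpha^{-1}$ and shows that the induced braiding on the deformed category is precisely $c^{H_\alpha}_{H,H}(x\otimes y)=R_\alpha^2 y\otimes R_\alpha^1 x$. Since $R^{h,g}$ was defined in \prref{pro:triang} exactly as the $\alpha$-deformation $R_{u^{-1}}$ of the trivial $R$-matrix, the braidings match by construction, and the compatibility condition $F(c_{U,V})\circ\phi_2(U,V)=\phi_2(V,U)\circ c'_{F(U),F(V)}$ for a braided monoidal functor is satisfied. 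This closes the argument.
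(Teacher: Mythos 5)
Your proposal is correct and follows essentially the same route as the paper, which states this corollary without a separate proof precisely because it is the chain you describe: \thref{reconstructionQuasi} plus the Kassel correspondence reduce the problem to quasi-triangular structures, Corollary~\ref{ExplStrMon} and \prref{pro:triang} pin these down as $(\Bbbk[G]^{h,g}_q,R^{h,g})=(\Bbbk[G],1\otimes 1)_{q^{-1}h^{-1}\otimes g}$, and the computation of $R_\alpha$ in the Preliminaries shows $\Gamma(u)$ is a symmetric monoidal isomorphism. Your explicit attention to the braiding compatibility of $\Gamma(u)$ is exactly the point the paper delegates to the displayed formula for $c^{H_\alpha}_{H,H}$.
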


If we take $G=\mathbb{Z}$ and keep the notations as in the statement of \thref{teo:LaurentQuasi} we then get
the following.

\begin{corollary}\colabel{BraidedZ}
Up to isomorphism, to give a (quasi)triangular quasi-bialgebra structures on the group algebra
$\Bbbk[\langle g\rangle]$ is equivalent to give a triple
$(q, a, b)\in (\Bbbk\backslash \{0\})\times \mathbb{Z}\times \mathbb{Z}$. For such a triple
$(q, a, b)$ we have a unique (quasi)triangular quasi-bialgebra structure on the group algebra
$\Bbbk[\langle g\rangle]$, and this is $\Bbbk[\langle g\rangle]^{a, b}_q$ equipped with the
$R$-matrix $R^{a, b}:=g^{a+b}\otimes g^{-a-b}$. Furthermore,
$(\Bbbk[\langle g\rangle]^{a, b}_q, R^{a, b})=(\Bbbk[\langle g\rangle], 1\otimes 1)_{q^{-1}g^{-a}\otimes g^b}$,
and so, up to a braided monoidal category isomorphism, the category ${}_{\Bbbk[\langle g\rangle]}\mathfrak{M}$
admits a unique braided monoidal (actually symmetric) structure, if it is considered monoidal via a structure
induced by that of $\mathfrak{M}$. Namely, the one obtained from the trivial (quasi)triangular structure
of the ordinary bialgebra $\Bbbk[\langle g\rangle]$.
\end{corollary}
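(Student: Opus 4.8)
The plan is to deduce this corollary by specializing \prref{pro:triang} and Corollary~\ref{coro:kgequiv} to the torsion-free abelian group $G=\mathbb{Z}=\langle g\rangle$. The only subtlety is notational: in the general statements the symbols $g$ and $h$ stand for arbitrary elements of $G$, whereas here $g$ is a fixed generator of $\langle g\rangle\cong\mathbb{Z}$, so every element of $G$ is a power of $g$. Thus the generic pair $(h,g)\in G\times G$ of Corollary~\ref{ExplStrMon} is to be read as $(g^a,g^b)$ with $a,b\in\mathbb{Z}$, and the classifying triple $(q,h,g)\in(\Bbbk\backslash\{0\})\times G\times G$ becomes $(q,a,b)\in(\Bbbk\backslash\{0\})\times\mathbb{Z}\times\mathbb{Z}$.

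First I would record, via \thref{teo:LaurentQuasi}, that up to isomorphism every quasi-bialgebra structure on $\Bbbk[\langle g\rangle]$ is the structure $\Bbbk[\langle g\rangle]^{a,b}_q$ with $\phi=g^a\otimes 1_H\otimes g^b$, $\lambda=qg^{-b}$ and $\rho=qg^a$; this is precisely $\Bbbk[G]^{h,g}_q$ of Corollary~\ref{ExplStrMon} under the substitution $h\mapsto g^a$, $g\mapsto g^b$. Next I would apply \prref{pro:triang} under the same substitution: it provides a unique $R$-matrix $R^{h,g}=gh\otimes(gh)^{-1}$, turning the structure into a triangular one. Substituting gives $R^{a,b}=g^{a+b}\otimes g^{-a-b}$, while the deformation identity $(\Bbbk[G]^{h,g}_q,R^{h,g})=(\Bbbk[G],1\otimes 1)_{q^{-1}h^{-1}\otimes g}$ specializes to $(\Bbbk[\langle g\rangle]^{a,b}_q,R^{a,b})=(\Bbbk[\langle g\rangle],1\otimes 1)_{q^{-1}g^{-a}\otimes g^b}$. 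Combining the uniqueness of the $R$-matrix with the parametrization of the underlying quasi-bialgebra yields the asserted bijection between (quasi)triangular structures and triples $(q,a,b)$.

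For the concluding assertion about the category ${}_{\Bbbk[\langle g\rangle]}\mathfrak{M}$ I would simply quote Corollary~\ref{coro:kgequiv} for $G=\mathbb{Z}$: the deformation by the invertible element $u^{-1}=q^{-1}g^{-a}\otimes g^b$ induces, through $\Gamma(\cdot)$, a braided (in fact symmetric) monoidal category isomorphism between the representations of $\Bbbk[\langle g\rangle]^{a,b}_q$ and those of the ordinary bialgebra $\Bbbk[\langle g\rangle]$ carrying its trivial triangular structure. I expect no genuine obstacle here: beyond the bookkeeping of the identification $h=g^a$, $g=g^b$, there is no new computation, and the only point meriting care is the harmless clash between the generic $g\in G$ of \prref{pro:triang} and the generator $g$ of $\langle g\rangle$, which the opening identification settles once and for all.
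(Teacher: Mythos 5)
Your proposal is correct and follows exactly the route the paper intends: the corollary is stated as an immediate specialization of \prref{pro:triang}, Corollary~\ref{coro:kgequiv} and \thref{teo:LaurentQuasi} to $G=\mathbb{Z}=\langle g\rangle$ (the paper gives no separate proof, only the preceding sentence ``If we take $G=\mathbb{Z}$\dots''), and your substitution $h\mapsto g^a$, $g\mapsto g^b$ together with the resulting formulas $R^{a,b}=g^{a+b}\otimes g^{-a-b}$ and $u^{-1}=q^{-1}g^{-a}\otimes g^{b}$ matches the statement precisely. Your explicit attention to the clash between the generic $g\in G$ and the fixed generator $g$ is a sensible bit of added care, but introduces nothing beyond the paper's own argument.
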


\section{The Hom-category}

Let $G$ be a free abelian group.
It can be shown that a representation of the group algebra $\Bbbk[G]$ identifies with a pair
$(M,(f_{g})_{g\in S})$, where $M$ is a $\Bbbk$-vector space and $(f_{g})_{g\in S}$ is a family of commuting
$\Bbbk$-automorphisms of $M$ indexed by a set of generators of $G$. This gives us a new description
of the category ${}_{\Bbbk[G]}\mathfrak{M}$. In the case when $G=\langle g\rangle$ is the infinite cyclic group
we will see that this description of ${}_{\Bbbk[\langle g\rangle]}\mathfrak{M}$ coincides with a so called
Hom-category, previously introduced in \cite[Section 1]{CG}. This will allow us to describe, up to an isomorphism,
all the braided monoidal structure on the Hom-category of $\mathfrak{M}$.

\begin{definition}
Let $\mathcal{C}$ be an ordinary category. We associate to $\mathcal{C}$ a new category $%
\mathcal{H}\left( \mathcal{C}\right) $ as follows.
Objects are pairs $\left( M,f_{M}\right) $ with $M\in \mathcal{C}$ and $%
f_{M}\in \mathrm{Aut}_{\mathcal{C}}(M).$ A morphism $\xi :\left(
M,f_{M}\right) \rightarrow \left( N,f_{N}\right) $ is a morphism $\xi
:M\rightarrow N$ in $\mathcal{C}$ such that%
\begin{equation}
f_{N}\circ \xi =\xi \circ f_{M}.  \label{form:MorphH}
\end{equation}%
The category $\mathcal{H}\left( \mathcal{C}\right) $ is called the\emph{\
Hom-category} associated to $\mathcal{C}$.
\end{definition}

In the case when $\mathcal{C}=\mathfrak{M}$ we have the following description
for $\mathcal{H}\left( \mathcal{C}\right)$.

\begin{proposition}\prlabel{LaurentHom}
\label{pro:W}
We have a category isomorphism $W:{_{\Bbbk [\langle
g\rangle]}}\mathfrak{M}\rightarrow \mathcal{H}\left( \mathfrak{M}%
\right) $, given on objects by%
\begin{equation*}
W\left( X,\mu _{X}:\Bbbk [\left\langle g\right\rangle] \otimes X\rightarrow
X\right) =\left( X,f_{X}:X\rightarrow X\right) ,
\end{equation*}%
where $f_{X}\left( x\right) :=\mu _{X}\left( g\otimes x\right) ,$ for all $x\in X$,
and on morphisms by $W\xi =\xi $.
\end{proposition}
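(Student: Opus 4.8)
The plan is to exhibit $W$ as a functor, construct an explicit inverse functor, and check that the two composites are the identities, so that $W$ is an isomorphism of categories. The whole argument rests on one structural observation: since $\left\langle g\right\rangle \cong \mathbb{Z}$ is infinite cyclic and hence free on the single generator $g$, a left $\Bbbk[\langle g\rangle]$-module structure on a vector space $X$ is precisely the same datum as a single $\Bbbk$-automorphism of $X$, namely the action of $g$. This is exactly the motivation spelled out in the Introduction for passing from $\Bbbk[X]$ to $\Bbbk[X,X^{-1}]=\Bbbk[\mathbb{Z}]$, and it is the conceptual content that makes the proposition essentially a bookkeeping verification.

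First I would check that $W$ is well defined on objects. Given $(X,\mu_X)\in {}_{\Bbbk[\langle g\rangle]}\mathfrak{M}$, the map $f_X\colon X\to X$, $f_X(x)=\mu_X(g\otimes x)=gx$, is $\Bbbk$-linear, and because $g$ is invertible in $\Bbbk[\langle g\rangle]$ with inverse $g^{-1}$, the module axioms give $g^{-1}(gx)=x=g(g^{-1}x)$, so $x\mapsto g^{-1}x$ is a two-sided inverse of $f_X$. Hence $f_X\in \mathrm{Aut}_\Bbbk(X)$ and $(X,f_X)\in \mathcal{H}(\mathfrak{M})$. For morphisms, if $\xi\colon (X,\mu_X)\to(Y,\mu_Y)$ is $\Bbbk[\langle g\rangle]$-linear, then in particular $\xi(gx)=g\xi(x)$, i.e. $\xi\circ f_X=f_Y\circ\xi$, which is exactly the defining condition of a morphism in $\mathcal{H}(\mathfrak{M})$; thus $W\xi:=\xi$ is a legitimate morphism. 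Functoriality (preservation of identities and composition) is immediate since $W$ is the identity on underlying linear maps.

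Next I would construct the inverse $W'\colon \mathcal{H}(\mathfrak{M})\to {}_{\Bbbk[\langle g\rangle]}\mathfrak{M}$. Given $(M,f_M)$, the assignment $g^{n}\mapsto f_M^{\,n}$ is a well-defined group homomorphism $\langle g\rangle\to \mathrm{Aut}_\Bbbk(M)$ precisely because $f_M$ is invertible, so negative powers make sense and there are no relations to verify. By the universal property of the group algebra this extends uniquely to a $\Bbbk$-algebra map $\Bbbk[\langle g\rangle]\to \mathrm{End}_\Bbbk(M)$, equivalently to a left module structure on $M$ in which $g$ acts as $f_M$. On morphisms $W'$ is again the identity on underlying maps: the condition $f_N\circ\xi=\xi\circ f_M$ says $\xi$ commutes with the action of $g$, hence with the action of every $g^{n}$ and, by $\Bbbk$-linearity, with the action of all of $\Bbbk[\langle g\rangle]$, so $\xi$ is module-linear.

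Finally I would verify that $W$ and $W'$ are mutually inverse. On objects, starting from $(X,\mu_X)$ the functor $W$ records $f_X=(g\cdot)$, and $W'$ then rebuilds the module in which $g^{n}$ acts as $f_X^{\,n}=(g^{n}\cdot)$; since a module structure over $\Bbbk[\langle g\rangle]$ is determined by the action of $g$, this recovers $\mu_X$ exactly. Conversely, from $(M,f_M)$ the functor $W'$ makes $g$ act as $f_M$, and $W$ reads off $f=(g\cdot)=f_M$. On morphisms both functors act as the identity, so both composites are identity functors. I expect no genuine obstacle here; the only point requiring care is the one highlighted above, that invertibility of $f_M$ is exactly what guarantees $g^{n}\mapsto f_M^{\,n}$ is a homomorphism on all of $\langle g\rangle$, and this is what forces the use of the group algebra $\Bbbk[\mathbb{Z}]$ rather than the polynomial ring $\Bbbk[X]$.
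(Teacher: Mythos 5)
Your proof is correct and takes essentially the same approach as the paper's, which compresses the whole argument into the chain of isomorphisms $\mathrm{Alg}_{\Bbbk}\left( \Bbbk[\langle g\rangle], \mathrm{End}_{\Bbbk}(V)\right) \cong \mathrm{Grp}\left( \langle g\rangle, \mathrm{Aut}_{\Bbbk}(V)\right) \cong \mathrm{Aut}_{\Bbbk}(V)$ coming from the universal property of the group algebra and the freeness of $\langle g\rangle$ on one generator. You merely make explicit the inverse functor and the verification that the two composites are identities, which the paper leaves to the reader.
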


\begin{proof}
It can be easily seen that
\begin{equation*}
\mathrm{Alg}_{\Bbbk }\left( \Bbbk [\left\langle g\right\rangle], \mathrm{End}%
_{\Bbbk }\left( V\right) \right) \cong \mathrm{Grp}\left( \left\langle
g\right\rangle ,\mathrm{Aut}_{\Bbbk }\left( V\right) \right) \cong \mathrm{%
Aut}_{\Bbbk }\left( V\right) .
\end{equation*}%
Therefore, to a left $\Bbbk [\left\langle
g\right\rangle]$-module $\left( V,\mu _{V}\right) $ corresponds a pair $%
\left( V,f_{V}\right)$, where $V$ is a $\Bbbk $-vector space and $f_{V}\in
\mathrm{Aut}_{\Bbbk }\left( V\right) .$ The correspondence is given by%
\begin{equation*}
gv:=f_{V}\left( v\right) ,\text{ for all }v\in V.
\end{equation*}%
A morphism $\xi :\left( V,\mu _{V}\right) \rightarrow \left( W,\mu
_{W}\right) $ of left $\Bbbk [\left\langle g\right\rangle]$-modules
corresponds to a $\Bbbk $-linear map $\xi :V\rightarrow W$ such that, for
every $v\in V,\xi \left( gv\right) =g\xi \left( v\right) ,$ i.e., for every $%
v\in V$, $\xi \left( f_{V}\left( v\right) \right) =f_{W}\xi \left( v\right) $
or, equivalently, $\xi \circ f_{V}=f_{W}\circ \xi .$
\end{proof}

\begin{theorem}
\label{teo:meta}Let $\left( \mathcal{A},\otimes ,\mathbf{1},a,r,l\right) $
be a monoidal category, let $\mathcal{A}^{\prime }$ be a category and let $W:%
\mathcal{A}\rightarrow \mathcal{A}^{\prime }$ be a category isomorphism. For
every $X^{\prime },Y^{\prime },Z^{\prime }\in \mathcal{A}^{\prime }$ we set $%
X:=W^{-1}\left( X^{\prime }\right) ,Y:=W^{-1}\left( Y^{\prime }\right) $ and
$Z:=W^{-1}\left( Z^{\prime }\right) $, and define%
\begin{eqnarray*}
X^{\prime }\otimes ^{\prime }Y^{\prime } \hspace{-2mm}&:=&\hspace{-2mm}W\left( X\otimes Y\right)
,\qquad \mathbf{1}^{\prime }:=W\left( \mathbf{1}\right) , \\
l_{X^{\prime }}^{\prime } \hspace{-2mm}&:=&\hspace{-2mm}\left( \mathbf{1}^{\prime }\otimes ^{\prime
}X^{\prime }=W\left( \mathbf{1}\otimes X\right) \overset{Wl_{X}}{%
\longrightarrow }WX=X^{\prime }\right) , \\
r_{X^{\prime }}^{\prime }\hspace{-2mm} &:=&\hspace{-2mm}\left( X^{\prime }\otimes ^{\prime }\mathbf{1}%
^{\prime }=W\left( X\otimes \mathbf{1}\right) \overset{Wr_{X}}{%
\longrightarrow }WX=X^{\prime }\right) , \\
a_{X^{\prime },Y^{\prime },Z^{\prime }}^{\prime } \hspace{-2mm}&:=&\hspace{-2mm}\left( (
X^{\prime }\otimes ^{\prime }Y^{\prime }) \otimes ^{\prime }Z^{\prime
}=W( ( X\otimes Y) \otimes Z) \overset{Wa_{X,Y,Z}}{%
\longrightarrow }W( X\otimes ( Y\otimes Z))
=X^{\prime }\otimes ^{\prime }( Y^{\prime }\otimes ^{\prime }Z^{\prime
}) \right).
\end{eqnarray*}%
Then $\left( \mathcal{A}^{\prime },\otimes ^{\prime },\mathbf{1}^{\prime
},a^{\prime },r^{\prime },l^{\prime }\right) $ is monoidal. Moreover, $\left(
W,w_{0},w_{2}\right) :\left( \mathcal{A},\otimes ,\mathbf{1},a,r,l\right)
\rightarrow \left( \mathcal{A}^{\prime },\otimes ^{\prime },\mathbf{1}%
^{\prime },a^{\prime }, r^{\prime }, l^{\prime }\right) $ is a strict monoidal
isomorphism functor.

Furthermore, if $\left( \mathcal{A},\otimes ,\mathbf{1},a,r,l,c\right) $ is
(symmetric) braided then so is $\left( \mathcal{A}^{\prime },\otimes ^{\prime },%
\mathbf{1}^{\prime },a^{\prime },r^{\prime },l^{\prime },c^{\prime }\right)
, $ where%
\begin{equation*}
c_{X^{\prime },Y^{\prime }}^{\prime }=\left( X^{\prime }\otimes Y^{\prime
}=W\left( X\otimes Y\right) \overset{Wc_{X,Y}}{\longrightarrow }W\left(
Y\otimes X\right) =Y^{\prime }\otimes X^{\prime }\right),
\end{equation*}%
and via these structures $\left( W,w_{0},w_{2}\right) $ becomes a strict isomorphism of
(symmetric) braided monoidal categories.
\end{theorem}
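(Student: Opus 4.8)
The plan is to regard the whole statement as a transport of the monoidal (and braided) structure of $\mathcal{A}$ along the isomorphism $W$, so that every axiom in $\mathcal{A}'$ becomes the $W$-image of the corresponding axiom in $\mathcal{A}$. First I would complete the definition of $\otimes'$ on morphisms: for $\xi':X'\to U'$ and $\eta':Y'\to V'$ in $\mathcal{A}'$, setting $\xi:=W^{-1}(\xi')$ and $\eta:=W^{-1}(\eta')$, I define $\xi'\otimes'\eta':=W(\xi\otimes\eta)$, a morphism $W(X\otimes Y)\to W(U\otimes V)$, i.e. $X'\otimes'Y'\to U'\otimes'V'$. Since $W$, $W^{-1}$ and $\otimes$ are functors and $WW^{-1}=\mathrm{Id}$, the assignment $\otimes'$ preserves identities and composition, hence is a functor $\mathcal{A}'\times\mathcal{A}'\to\mathcal{A}'$. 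In the same way, because $W$ is a bijection on both objects and morphisms and functors preserve isomorphisms, the families $a'$, $l'$, $r'$ (and later $c'$) consist of isomorphisms in $\mathcal{A}'$, and their naturality squares are exactly the $W$-images of the naturality squares of $a$, $l$, $r$ (and $c$); I would spell this out once for $a'$ and note that the remaining cases are identical.

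The key technical observation, which I would isolate and then reuse for every axiom, is that each side of the coherence identities in $\mathcal{A}'$ is literally $W$ applied to the corresponding side in $\mathcal{A}$. Indeed, for morphisms of the form $W(\alpha)$ and $W(\beta)$ one has $W(\alpha)\otimes'W(\beta)=W(\alpha\otimes\beta)$ directly from the definition, and $\mathrm{id}_{X'}=W(\mathrm{id}_X)$; combining this with the equalities $W^{-1}(X'\otimes'Y')=X\otimes Y$ and $W^{-1}(\mathbf{1}')=\mathbf{1}$, and with the fact that $W$ preserves composition, each of the three factors in the left-hand side of the Pentagon Axiom for $\mathcal{A}'$ is the $W$-image of the corresponding factor of the Pentagon Axiom for $\mathcal{A}$, whence the whole left-hand side equals $W$ of the left-hand side in $\mathcal{A}$; the same holds on the right. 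The Pentagon Axiom in $\mathcal{A}'$ then follows by applying $W$ to the Pentagon Axiom in $\mathcal{A}$, and the Triangle Axiom is handled verbatim in the same manner.

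Next I would check that $(W,w_0,w_2)$ is strict monoidal. By the very definition of $\otimes'$ and $\mathbf{1}'$ one has $W(X)\otimes'W(Y)=W(X\otimes Y)$ and $\mathbf{1}'=W(\mathbf{1})$, so the comparison morphisms are $w_2(X,Y)=\mathrm{id}_{W(X\otimes Y)}$ and $w_0=\mathrm{id}_{W(\mathbf{1})}$; these being identities, the functor is strict provided its monoidal-functor axioms hold. But with $w_2,w_0$ identities the hexagonal compatibility diagram (the one displayed in the definition of a monoidal functor) collapses to the equation $W(a_{X,Y,Z})=a'_{W(X),W(Y),W(Z)}$, which is exactly the definition of $a'$, while the two unit conditions collapse to $W(l_X)=l'_{W(X)}$ and $W(r_X)=r'_{W(X)}$, again the definitions of $l'$ and $r'$. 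Thus $(W,w_0,w_2)$ is a strict monoidal isomorphism, its inverse being $W^{-1}$ equipped with identity comparison morphisms.

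Finally, for the braided case I would transport $c'_{X',Y'}:=W(c_{X,Y})$ and repeat the preceding pattern: naturality of $c'$ is the $W$-image of naturality of $c$, and the two hexagon identities for $c'$ are the $W$-images of those for $c$, exactly as for the Pentagon. The braided-functor condition $W(c_{U,V})\circ w_2(U,V)=w_2(V,U)\circ c'_{W(U),W(V)}$ reduces, since the $w_2$ are identities, to $W(c_{U,V})=c'_{W(U),W(V)}$, which is the definition of $c'$; hence $W$ is a strict braided isomorphism. In the symmetric case, $c'_{Y',X'}\circ c'_{X',Y'}=W(c_{Y,X}\circ c_{X,Y})=W(\mathrm{id}_{X\otimes Y})=\mathrm{id}_{X'\otimes'Y'}$, so $\mathcal{A}'$ is symmetric as well. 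I do not expect a genuine obstacle here: the only point requiring care is the bookkeeping in the key observation above, namely that the tensor of morphisms in $\mathcal{A}'$ is $W$ conjugated by $W^{-1}$, which is precisely what lets $W$ convert every coherence diagram of $\mathcal{A}$ into the corresponding one of $\mathcal{A}'$ and conversely.
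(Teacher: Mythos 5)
Your proposal is correct and is exactly the standard transport-of-structure argument that the paper itself declines to spell out (its proof is the single line ``It is straightforward, cf.\ [SR, 4.4.3 and 4.4.5]''). You have simply supplied the routine details — defining $\otimes'$ on morphisms by conjugation with $W^{-1}$, observing that every coherence diagram in $\mathcal{A}'$ is the $W$-image of the corresponding diagram in $\mathcal{A}$, and noting that the comparison morphisms are identities — so there is no divergence from the paper's intended route.
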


\begin{proof}
It is straightforward, cf. \cite[4.4.3 and 4.4.5]{SR}.

\end{proof}

\begin{theorem}
\label{teo:mainVec}
Up to isomorphism, the monoidal structures on the category
$\mathcal{H}(\mathfrak{M})$ induced by the strict monoidal structure of $\mathfrak{M}$
are completely determined by triples $(q, a, b)\in
(\Bbbk\backslash\{0\})\times \mathbb{Z}\times \mathbb{Z}$. Explicitly,
if $(\mathcal{H}(\mathfrak{M}), \otimes, (\Bbbk, f_\Bbbk), l, r)$ is such a structure then
there exists $(q, a, b)\in
(\Bbbk\backslash\{0\})\times \mathbb{Z}\times \mathbb{Z}$ such that
$\mathcal{H}(\mathfrak{M})=\mathcal{H}^{a, b}_q(\mathfrak{M})$ as monoidal category, where
by $\mathcal{H}^{a, b}_q(\mathfrak{M})$ we denote the category $\mathcal{H}(\mathfrak{M})$
equipped with monoidal structure given by
\begin{eqnarray*}
&&
\left(X, f_{X}\right) \otimes \left(Y, f_{Y}\right) =\left( X\otimes
Y, f_{X}\otimes f_{Y}\right) \qquad , \qquad\left( \mathbf{\Bbbk },%
\mathrm{Id}_{\Bbbk }\right),\\
&&
a_{\left( X,f_{X}\right) ,\left( Y,f_{Y}\right) ,\left( Z,f_{Z}\right)
}\left( \left( x\otimes y\right) \otimes z\right) =f_{X}^{a}\left(
x\right) \otimes \left( y\otimes f_{Z}^{b}\left( z\right) \right),~
\mbox{for all $x\in X$, $y\in Y$, $z\in Z$,} \\
&&
l_{(X, f_X)}=l_X: (\Bbbk\otimes X, \mathrm{Id}_\Bbbk\otimes f_X)\rightarrow (X, f_X),~
l_{X}\left(\kappa\otimes x\right)
=\kappa qf_{X}^{-b}\left( x\right),~\mbox{for all $\kappa\in \Bbbk$, $x\in X$,}\\
&&
r_{(X, f_X)}=r_X: (X\otimes \Bbbk, f_X\otimes \mathrm{Id}_\Bbbk)\rightarrow (X, f_X),~
r_{X}\left( x\otimes \kappa\right) =\kappa qf_{X}^{a}\left( x\right),~
\mbox{for all $\kappa\in \Bbbk$, $x\in X$.}
\end{eqnarray*}
Moreover, the monoidal category $\mathcal{H}^{a, b}_q(\mathfrak{M})$ admits a
unique braided (actually symmetric) monoidal structure, given by the braiding
\[
c_{\left( X,f_{X}\right) ,\left( Y,f_{Y}\right) }=c_{X, Y}: (X\otimes Y, f_X\otimes f_Y)
\rightarrow (Y\otimes X, f_Y\otimes f_X),~
c_{X, Y}\left( x\otimes y\right)
=f_{Y}^{-a-b}\left( y\right) \otimes f_{X}^{a+b}\left( x\right),
\]%
for all $x\in X$ and $y\in Y$. Consequently, the functor $W$ defined in
Proposition \ref{pro:W} produces a strict symmetric
monoidal category isomorphism
\begin{equation*}
\left( W,w_{0},w_{2}\right) :{_{\Bbbk [\left\langle g\right\rangle]
_{q}^{a,b}}}\mathfrak{M}\rightarrow \mathcal{H}_{q}^{a,b}\left(
\mathfrak{M}\right) .
\end{equation*}
\end{theorem}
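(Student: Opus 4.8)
The plan is to deduce everything from the transport-of-structure result Theorem~\ref{teo:meta}, applied to the category isomorphism $W$ of \prref{LaurentHom}, feeding in the explicit quasi-bialgebra and $R$-matrix data already computed in \thref{teo:LaurentQuasi} and \coref{BraidedZ}. So first I would fix a triple $(q,a,b)$ and let $\mathcal{A}={}_{\Bbbk[\langle g\rangle]_q^{a,b}}\mathfrak{M}$ be the monoidal category whose constraints $a,l,r$ are those attached to the quasi-bialgebra $\Bbbk[\langle g\rangle]_q^{a,b}$, i.e. built from $\phi=g^a\otimes 1\otimes g^b$, $\lambda=qg^{-b}$, $\rho=qg^a$ as recalled after \deref{quasi-bialgebra}. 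Applying Theorem~\ref{teo:meta} with $\mathcal{A}'=\mathcal{H}(\mathfrak{M})$ and this $W$ immediately produces a monoidal structure on $\mathcal{H}(\mathfrak{M})$ together with a strict monoidal isomorphism $(W,w_0,w_2)$; this transported structure is what I would define to be $\mathcal{H}_q^{a,b}(\mathfrak{M})$.

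Next I would carry out the (routine) computation identifying the transported constraints with the displayed formulas. Since $W$ is the identity on underlying spaces and morphisms and sends the module $X$ to $(X,f_X)$ with $g$ acting as $f_X$ (so $g^n x=f_X^n(x)$), the recipe of Theorem~\ref{teo:meta} gives $a'_{(X,f_X),(Y,f_Y),(Z,f_Z)}=a_{X,Y,Z}$, $l'_{(X,f_X)}=l_X$ and $r'_{(X,f_X)}=r_X$; substituting the above $\phi,\lambda,\rho$ into the constraints of ${}_H\mathfrak{M}$ and rewriting $g^a x=f_X^a(x)$, $g^{-b}x=f_X^{-b}(x)$, etc., reproduces exactly the associativity constraint $f_X^a(x)\otimes(y\otimes f_Z^b(z))$ and the unit constraints $\kappa qf_X^{-b}(x)$ and $\kappa qf_X^a(x)$. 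For the classification clause I would observe that, because $W$ fixes underlying vector spaces, a monoidal structure on $\mathcal{H}(\mathfrak{M})$ induced by the strict structure of $\mathfrak{M}$ corresponds under $W^{-1}$ to such a structure on ${}_{\Bbbk[\langle g\rangle]}\mathfrak{M}$, and conversely; by \thref{reconstructionQuasi} and \thref{teo:LaurentQuasi} the latter are, up to isomorphism, exactly the $\Bbbk[\langle g\rangle]_q^{a,b}$, so the former are exactly the $\mathcal{H}_q^{a,b}(\mathfrak{M})$.

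For the braided assertion I would invoke \coref{BraidedZ}: $\Bbbk[\langle g\rangle]_q^{a,b}$ admits a unique quasi-triangular (in fact triangular) structure, with $R$-matrix $R^{a,b}=g^{a+b}\otimes g^{-a-b}$, so by \cite[Proposition XV.2.2]{Kassel} the category ${}_{\Bbbk[\langle g\rangle]_q^{a,b}}\mathfrak{M}$ carries a unique, and symmetric, braiding. Transporting it along $W$ via the second half of Theorem~\ref{teo:meta} yields a symmetric braiding on $\mathcal{H}_q^{a,b}(\mathfrak{M})$, unique because the transport is a bijection on braidings; computing $c_{X,Y}(x\otimes y)=R^2y\otimes R^1x=g^{-a-b}y\otimes g^{a+b}x=f_Y^{-a-b}(y)\otimes f_X^{a+b}(x)$ gives the displayed braiding. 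The final claim that $(W,w_0,w_2)$ is a strict symmetric monoidal isomorphism onto $\mathcal{H}_q^{a,b}(\mathfrak{M})$ is then exactly the output of Theorem~\ref{teo:meta} applied to the symmetric braided category ${}_{\Bbbk[\langle g\rangle]_q^{a,b}}\mathfrak{M}$.

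The mathematics here is essentially all in the earlier results, so the real work is bookkeeping. The point I would be most careful about is that the phrase ``induced by the strict monoidal structure of $\mathfrak{M}$'' is genuinely preserved in both directions along $W$, and that the uniqueness of the Hom-side braiding comes from the uniqueness of the $R$-matrix (\coref{BraidedZ}) and not merely from its existence. Both follow once one notes that $W$ and $W^{-1}$ act as the identity on underlying objects and morphisms, so that the transport bijection of Theorem~\ref{teo:meta} restricts to a one-to-one correspondence between the relevant monoidal (resp. braided) structures on the two categories.
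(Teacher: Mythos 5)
Your proposal is correct and follows essentially the same route as the paper: transport the monoidal and braided structures of ${}_{\Bbbk[\langle g\rangle]_q^{a,b}}\mathfrak{M}$ along the isomorphism $W$ of Proposition~\ref{pro:W} via Theorem~\ref{teo:meta}, classify them using Theorems~\ref{th:reconstructionQuasi} and~\ref{th:teo:LaurentQuasi} and Corollary~\ref{co:BraidedZ}, and verify the explicit formulas by rewriting the action of $g^{n}$ as $f_X^{n}$. The only difference is that you make explicit the (correct) observation that the property of being induced by the strict structure of $\mathfrak{M}$ is preserved in both directions because $W$ is the identity on underlying spaces, a point the paper leaves implicit.
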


\begin{proof}
By Proposition \ref{pro:W} we have a category isomorphism
$W:{_{\Bbbk[\left\langle g\right\rangle]}}\mathfrak{M}\rightarrow \mathcal{H}\left(
\mathfrak{M}\right) .$ By Theorem \ref{teo:meta} the monoidal structures on
$\mathcal{H}(\mathfrak{M})$ are in a one to one correspondence with those of
${}_{\Bbbk[\langle g\rangle]}\mathfrak{M}$. So, according to \thref{reconstructionQuasi},
the monoidal structures on $\mathcal{H}\left(\mathfrak{M}\right)$ induced by the
strict monoidal structure of $\mathfrak{M}$ are given by the quasi-bialgebra structures
of $\Bbbk[\langle g\rangle]$.
Using \thref{teo:LaurentQuasi} we get that, up to isomorphism, the desired monoidal structures on
$\mathcal{H}(\mathfrak{M})$ are completely determined by triples
$(q, a, b)\in (\Bbbk\backslash \{0\})\times \mathbb{Z}\times \mathbb{Z}$
as follows.

Let $\left(X, f_{X}\right), \left(Y, f_{Y}\right), \left(Z, f_{Z}\right)$
be objects in $\mathcal{H}\left(\mathfrak{M}\right)$.
The tensor product in $\mathcal{H}\left( \mathfrak{M}%
\right) $ is then given by
\begin{equation*}
\left( X,f_{X}\right) \otimes \left( Y,f_{Y}\right) :=W\left( \left( X,\mu
_{X}\right) \otimes \left( Y,\mu _{Y}\right) \right) =W\left( X\otimes Y,\mu
_{X\otimes Y}\right) =\left( X\otimes Y,f_{X\otimes Y}\right),
\end{equation*}%
where
\begin{eqnarray*}
f_{X\otimes Y}\left(x\otimes y\right) &=&\mu _{X\otimes Y}\left( g\otimes
\left( x\otimes y\right) \right) =g\left( x\otimes y\right) =\Delta \left(
g\right) \left( x\otimes y\right) \\
&=&gx\otimes gy=f_{X}\left( x\right) \otimes f_{Y}\left( y\right) =\left(
f_{X}\otimes f_{Y}\right) \left( x\otimes y\right),
\end{eqnarray*}%
and so $\left(X, f_{X}\right)\otimes \left( Y,f_{Y}\right)=\left(
X\otimes Y, f_{X}\otimes f_{Y}\right) .$ The unit is $W\left( \mathbf{\Bbbk ,%
}\mu _{\Bbbk }\right) =\left( \mathbf{\Bbbk}, \mathrm{Id}_\Bbbk\right)$ since
\begin{equation*}
f_{\Bbbk }\left(\kappa\right) :=\mu _{\Bbbk }\left( g\otimes \kappa\right) =g\cdot
\kappa =\varepsilon \left( g\right)\kappa=\kappa,
\end{equation*}
for all $\kappa\in \Bbbk$. The left unit constraint is given, for every
$\kappa\in \Bbbk$, $x\in X$, by%
\begin{eqnarray*}
l_{\left( X,f_{X}\right) }\left(\kappa \otimes x\right)&=&\left( Wl_{\left(
X,\mu _{X}\right) }\right) \left(\kappa\otimes x\right) =l_{\left( X,\mu
_{X}\right) }\left(\kappa \otimes x\right) \\
&=&\kappa l_{\left( X,\mu _{X}\right) }\left( 1_{\Bbbk }\otimes x\right)
=\kappa \left(\lambda x\right) =\kappa \left(qg^{-b}x\right)
=\kappa qf_{X}^{-b}\left( x\right).
\end{eqnarray*}%
Likewise, the right unit constraint is given, for every $\kappa\in \Bbbk$, $x\in X$, by%
\begin{eqnarray*}
r_{_{\left( X,f_{X}\right) }}\left( x\otimes \kappa\right)&=&\left( Wr_{\left(
X,\mu _{X}\right) }\right) \left( x\otimes \kappa\right) =r_{\left( X,\mu
_{X}\right) }\left( x\otimes \kappa\right) \\
&=&\left( r_{\left( X,\mu _{X}\right) }\right) \left( x\otimes 1_{\Bbbk
}\right) \kappa=\left( \rho x\right)\kappa=\kappa\left(qg^{a}x\right)
=\kappa qf_{X}^{a}\left( x\right) .
\end{eqnarray*}

In a similar manner we compute that the associativity constraint is given, for every $x\in X$, $y\in Y$, $z\in Z$,
by
\begin{eqnarray*}
a_{\left( X,f_{X}\right) ,\left( Y,f_{Y}\right) ,\left( Z,f_{Z}\right)
}\left( \left( x\otimes y\right) \otimes z\right)&=&\left( Wa_{\left( X,\mu
_{X}\right) ,\left( Y,\mu _{Y}\right) ,\left( Z,\mu _{Z}\right) }\right)
\left( \left( x\otimes y\right) \otimes z\right)  \\
&=&a_{\left( X,\mu _{X}\right) ,\left( Y,\mu _{Y}\right) ,\left( Z,\mu
_{Z}\right) }\left( \left( x\otimes y\right) \otimes z\right) =\phi
^{1}x\otimes \left( \phi ^{2}y\otimes \phi ^{3}z\right)  \\
&=&g^{a}x\otimes \left( y\otimes g^{b}z\right)
=f_{X}^{a}\left( x\right) \otimes \left( y\otimes f_{Z}^{b}\left(z\right) \right).
\end{eqnarray*}%
Thus if we transport through $W$ the monoidal structure of ${}_{\Bbbk[\langle g\rangle]^{a, b}_p}\mathfrak{M}$
what we get on $\mathcal{H}(\mathfrak{M})$ is the monoidal structure of $\mathcal{H}(\mathfrak{M})^{a, b}_p$,
as needed.

Since ${}_{\Bbbk[\langle g\rangle]^{a, b}_p}\mathfrak{M}$ has a unique braided (actually symmetric)
monoidal structure by the above comments it follows that $\mathcal{H}(\mathfrak{M})^{a, b}_p$
has a unique braided (actually symmetric) monoidal structure, too. It is given by the braiding
defined, for every $x\in X$, $y\in Y$, by
\begin{eqnarray*}
c_{\left( X,f_{X}\right) ,\left( Y,f_{Y}\right) }\left( x\otimes y\right)
&=&\left( Wc_{\left( X,\mu _{X}\right) ,\left( Y,\mu _{Y}\right) }\right)
\left( x\otimes y\right) =c_{\left( X,\mu _{X}\right) ,\left( Y,\mu
_{Y}\right) }\left( x\otimes y\right)  \\
&=&R^{2}y\otimes R^{1}x=g^{-a-b}y\otimes g^{a+b}x=f_{Y}^{-a-b}\left( y\right)
\otimes f_{X}^{a+b}\left( x\right).
\end{eqnarray*}
The last assertion follows easily from Theorem \ref{teo:meta}.
\end{proof}

\begin{corollary}
\label{coro:Hequiv}Let $q\in \Bbbk \backslash \left\{ 0\right\}$ and
$a, b\in\mathbb{Z}$. We have isomorphisms of symmetric monoidal categories%
\begin{equation*}
\mathcal{H}_{q}^{a,b}\left( \mathfrak{M}\right)
\cong {_{\Bbbk\left\langle g\right\rangle _{q}^{a,b}}}\mathfrak{M}
\cong {}_{\Bbbk[\langle g\rangle]}\mathfrak{M}
\cong \mathcal{H}_{1}^{0, 0}\left( \mathfrak{M}\right)
\text{.}
\end{equation*}
\end{corollary}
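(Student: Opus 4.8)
The plan is to build the chain of isomorphisms in Corollary \ref{coro:Hequiv} by chaining together results already established in the excerpt, since each of the three isomorphisms has essentially been constructed earlier. Reading the claimed chain $\mathcal{H}_{q}^{a,b}\left( \mathfrak{M}\right) \cong {_{\Bbbk\left\langle g\right\rangle _{q}^{a,b}}}\mathfrak{M} \cong {}_{\Bbbk[\langle g\rangle]}\mathfrak{M} \cong \mathcal{H}_{1}^{0, 0}\left( \mathfrak{M}\right)$ from left to right, I would handle the three links in turn, noting that all functors involved are symmetric monoidal isomorphisms so their composite is one as well.

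First I would produce the leftmost isomorphism directly from \thref{teo:mainVec}: its final assertion gives a strict symmetric monoidal category isomorphism $\left( W,w_{0},w_{2}\right) :{_{\Bbbk [\left\langle g\right\rangle] _{q}^{a,b}}}\mathfrak{M}\rightarrow \mathcal{H}_{q}^{a,b}\left( \mathfrak{M}\right)$, which is exactly the first link (read backwards). Next, for the middle isomorphism ${_{\Bbbk\left\langle g\right\rangle _{q}^{a,b}}}\mathfrak{M} \cong {}_{\Bbbk[\langle g\rangle]}\mathfrak{M}$, I would invoke \prref{pro:triang} specialized to $G=\mathbb{Z}$ (equivalently \coref{BraidedZ}), which records that $(\Bbbk[\langle g\rangle]^{a, b}_q, R^{a, b})=(\Bbbk[\langle g\rangle], 1\otimes 1)_{q^{-1}g^{-a}\otimes g^b}$ as triangular quasi-bialgebras. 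Setting $u=q^{-1}g^{-a}\otimes g^{b}$, the functor $\Gamma(u)$ discussed right after \deref{quasi-bialgebra} is a monoidal category isomorphism ${}_{\Bbbk[\langle g\rangle]^{a,b}_q}\mathfrak{M}\rightarrow {}_{\Bbbk[\langle g\rangle]}\mathfrak{M}$, and since the deformation also transports the triangular structure (via $R_\alpha=(\alpha^2\otimes\alpha^1)R\alpha^{-1}$, as computed in the Preliminaries), this $\Gamma(u)$ is in fact a symmetric braided monoidal isomorphism.

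The rightmost isomorphism ${}_{\Bbbk[\langle g\rangle]}\mathfrak{M} \cong \mathcal{H}_{1}^{0, 0}\left( \mathfrak{M}\right)$ is just the special case $q=1$, $a=b=0$ of the first link, again provided by the functor $W$ of \thref{teo:mainVec} (for that triple $\phi$, $\lambda$, $\rho$ are all trivial, so $\Bbbk[\langle g\rangle]^{0,0}_1=\Bbbk[\langle g\rangle]$ with its ordinary bialgebra structure). Composing the three symmetric monoidal isomorphisms yields the full chain. I do not expect a genuine obstacle here, as the corollary is a bookkeeping consequence of the preceding theorems; the only point requiring mild care is confirming that the braidings match up under each functor—that is, that $\Gamma(u)$ and $W$ each intertwine the respective braidings—but this is exactly what the symmetric-monoidal-isomorphism assertions in \thref{teo:meta}, \prref{pro:triang}, and the closing statement of \thref{teo:mainVec} already guarantee.
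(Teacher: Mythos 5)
Your proposal is correct and follows essentially the same route as the paper: the outer isomorphisms come from the functor $W$ of Theorem \ref{teo:mainVec} (the rightmost being the case $q=1$, $a=b=0$), and the middle one from the deformation isomorphism $\Gamma$ attached to the identity $\Bbbk[\langle g\rangle]^{a,b}_q=\Bbbk[\langle g\rangle]_{q^{-1}g^{-a}\otimes g^{b}}$, which the paper packages as Corollary \ref{coro:kgequiv}. The only (harmless) discrepancy is the direction of $\Gamma$: with $u=q^{-1}g^{-a}\otimes g^{b}$ one has $\Gamma(u):{}_{\Bbbk[\langle g\rangle]}\mathfrak{M}\rightarrow{}_{\Bbbk[\langle g\rangle]^{a,b}_q}\mathfrak{M}$, the inverse being $\Gamma(u^{-1})$.
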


\begin{proof}
By Theorem \ref{teo:mainVec} we have
$\mathcal{H}_{q}^{a,b}\left( \mathfrak{M}\right)
\cong {_{\Bbbk\left\langle g\right\rangle _{q}^{a,b}}}\mathfrak{M}$, and
by Corollary \ref{coro:kgequiv} that
${_{\Bbbk\left\langle g\right\rangle _{q}^{a,b}}}\mathfrak{M}
\cong {}_{\Bbbk[\langle g\rangle]}\mathfrak{M}$. Both of them are isomorphisms of
symmetric monoidal categories.
\end{proof}

\begin{definition}
\label{def:Htilde}Let $\left( \mathcal{C},\otimes ,\mathbf{1},a,l,r\right) $
be a monoidal category. Following \cite[Section 1]{CG}, the category $%
\mathcal{H}\left( \mathcal{C}\right) $ becomes a monoidal category $\mathcal{%
H}\left( \mathcal{C}\right) =\left( \mathcal{H}\left( \mathcal{C}\right)
,\otimes ,\left( \mathbf{1},Id_{\mathbf{1}}\right) ,a,l,r\right) .$ Here by
abuse of notation we denote with the same letters the constraints of $%
\mathcal{C}$ regarded as morphisms in $\mathcal{H}\left( \mathcal{C}\right) $
(thus, for instance $a_{\left( M,f_{M}\right) }$ is $a_{M}$ regarded as a
morphism in $\mathcal{H}\left( \mathcal{C}\right) $). The tensor product of $%
\left( M,f_{M}\right) $ and $\left( N,f_{N}\right) $ is given by the formula%
\begin{equation*}
\left( M,f_{M}\right) \otimes \left( N,f_{N}\right) =\left( M\otimes
N,f_{M}\otimes f_{N}\right) .
\end{equation*}%
At the level of morphisms, the tensor product is the tensor product of
morphisms.

In \cite[Proposition 1.1]{CG}, a modified version $\widetilde{\mathcal{H}}%
\left( \mathcal{C}\right) =\left( \mathcal{H}\left( \mathcal{C}\right)
,\otimes ,\left( \mathbf{1},Id_{\mathbf{1}}\right) ,\widetilde{a},\widetilde{%
l},\widetilde{r}\right)$ of the monoidal category $\mathcal{H}(\mathcal{C})$ was given.
Namely, the associativity constraint $\widetilde{a}$ is defined, for
$\left( M,f_{M}\right)$, $\left( N,f_{N}\right)$, $\left(P,f_{P}\right)
\in \mathcal{H}\left( \mathcal{C}\right)$, by the formula%
\begin{equation*}
\widetilde{a}_{\left( M,f_{M}\right) ,\left( N,f_{N}\right) ,\left(
P,f_{P}\right) }=a_{M,N,P}\circ \left( \left( f_{M}\otimes N\right) \otimes
f_{P}^{-1}\right) =\left( f_{M}\otimes \left( N\otimes f_{P}^{-1}\right)
\right) \circ a_{M,N,P},
\end{equation*}%
while the unit constraints $\widetilde{l}$ and $\widetilde{r}$ are defined by%
\begin{equation*}
\widetilde{l}_{\left( M,f_{M}\right) }=f_{M}\circ l_{M}=l_{M}\circ \left(
\mathbf{1}\otimes f_{M}\right) \qquad \text{and}\qquad \widetilde{r}_{\left(
M,f_{M}\right) }=f_{M}\circ r_{M}=r_{M}\circ \left( f_{M}\otimes \mathbf{1}%
\right) .
\end{equation*}%
Furthermore, by \cite[Proposition 1.2]{CG}, if $\left( \mathcal{C},\otimes ,%
\mathbf{1},a,l,r,c\right) $ is a braided monoidal category then so is $\widetilde{%
\mathcal{H}}\left( \mathcal{C}\right) =\left( \mathcal{H}\left( \mathcal{C}%
\right) ,\otimes ,\left( \mathbf{1},Id_{\mathbf{1}}\right) ,\widetilde{a},%
\widetilde{l},\widetilde{r},c\right) .$

Hence, by \cite[Proposition 1.7]{CG}, we deduce that $\left( \mathcal{H}%
\left( \mathcal{C}\right) ,\otimes ,\left( \mathbf{1},Id_{\mathbf{1}}\right)
,a,l,r,c\right) $ is braided as well.
\end{definition}

As a consequence of Theorem \ref{teo:mainVec}, we have an alternative
description for the symmetric monoidal categories given in Definition %
\ref{def:Htilde}, providing that $\mathcal{C}=\mathfrak{M}$.

\begin{proposition}
\label{pro:Hrev}
We have the following equalities of braided monoidal categories%
\begin{equation*}
\mathcal{H}\left( \mathfrak{M}\right) =\mathcal{H}_{1}^{0,0}\left(
\mathfrak{M}\right) \qquad \text{and}\qquad \widetilde{\mathcal{H}}\left(
\mathfrak{M}\right) =\mathcal{H}_{1}^{1,-1}\left( \mathfrak{M}\right) .
\end{equation*}
\end{proposition}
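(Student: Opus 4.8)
The plan is to prove both equalities directly, by comparing the defining data on the two sides. All four categories involved, namely $\mathcal{H}\left(\mathfrak{M}\right)$, $\widetilde{\mathcal{H}}\left(\mathfrak{M}\right)$, $\mathcal{H}_{1}^{0,0}\left(\mathfrak{M}\right)$ and $\mathcal{H}_{1}^{1,-1}\left(\mathfrak{M}\right)$, have the same underlying category $\mathcal{H}\left(\mathfrak{M}\right)$, the same tensor product $\left(M,f_{M}\right)\otimes\left(N,f_{N}\right)=\left(M\otimes N,f_{M}\otimes f_{N}\right)$ (on objects and on morphisms), and the same unit object $\left(\Bbbk,\mathrm{Id}_{\Bbbk}\right)$, as is visible from Definition~\ref{def:Htilde} and from Theorem~\ref{teo:mainVec}. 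Hence the claimed equalities of braided monoidal categories reduce to a pointwise comparison of the associativity constraint, the two unit constraints, and the braiding.

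First I would record that we work with the strict monoidal structure of $\mathfrak{M}$, so the constraints $a_{M,N,P}$, $l_{M}$, $r_{M}$ of $\mathfrak{M}$ occurring in Definition~\ref{def:Htilde} are identities. Therefore $\mathcal{H}\left(\mathfrak{M}\right)$ has trivial associativity and unit constraints and its braiding is the flip $x\otimes y\mapsto y\otimes x$, while in $\widetilde{\mathcal{H}}\left(\mathfrak{M}\right)$ the data reduce to $\widetilde{a}\left(\left(x\otimes y\right)\otimes z\right)=f_{X}\left(x\right)\otimes\left(y\otimes f_{Z}^{-1}\left(z\right)\right)$, $\widetilde{l}\left(\kappa\otimes x\right)=\kappa f_{X}\left(x\right)$, $\widetilde{r}\left(x\otimes\kappa\right)=\kappa f_{X}\left(x\right)$, the braiding again being the flip $c$.

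For the first equality I would specialize the formulas of Theorem~\ref{teo:mainVec} to $(q,a,b)=(1,0,0)$. Since $f_{X}^{0}=\mathrm{Id}$, the associativity constraint becomes the identity reassociation, the unit constraints become $l_{X}\left(\kappa\otimes x\right)=\kappa x$ and $r_{X}\left(x\otimes\kappa\right)=\kappa x$, and the braiding becomes $c_{X,Y}\left(x\otimes y\right)=y\otimes x$; these agree termwise with the structure of $\mathcal{H}\left(\mathfrak{M}\right)$ recorded above, so $\mathcal{H}\left(\mathfrak{M}\right)=\mathcal{H}_{1}^{0,0}\left(\mathfrak{M}\right)$. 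For the second equality I would specialize to $(q,a,b)=(1,1,-1)$, where $f_{X}^{a}=f_{X}$, $f_{X}^{-b}=f_{X}$ and $a+b=0$; the associativity constraint becomes $\left(x\otimes y\right)\otimes z\mapsto f_{X}\left(x\right)\otimes\left(y\otimes f_{Z}^{-1}\left(z\right)\right)$, the unit constraints become $\kappa\otimes x\mapsto\kappa f_{X}\left(x\right)$ and $x\otimes\kappa\mapsto\kappa f_{X}\left(x\right)$, and the braiding reduces to the flip. These match $\widetilde{a}$, $\widetilde{l}$, $\widetilde{r}$ and $c$ of $\widetilde{\mathcal{H}}\left(\mathfrak{M}\right)$, giving $\widetilde{\mathcal{H}}\left(\mathfrak{M}\right)=\mathcal{H}_{1}^{1,-1}\left(\mathfrak{M}\right)$.

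I do not expect any genuine obstacle: the entire content is a bookkeeping of the exponents of $f_{X}$, $f_{Y}$, $f_{Z}$. The only points requiring care are the sign conventions, matching the exponent $-b$ in the left unit constraint and $a+b$ in the braiding against the values $1$ and $0$ produced by $(a,b)=(1,-1)$, and the observation that the strictness of $\mathfrak{M}$ is exactly what upgrades these comparisons from monoidal isomorphisms to honest equalities of braided monoidal categories.
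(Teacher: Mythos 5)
Your proposal is correct and matches the paper's (implicit) argument: the paper offers no written proof, presenting the proposition as an immediate consequence of Theorem~\ref{teo:mainVec} and Definition~\ref{def:Htilde}, and your verification simply writes out the specializations $(q,a,b)=(1,0,0)$ and $(1,1,-1)$ that the paper takes for granted. The exponent bookkeeping ($-b$ in $l$, $a$ in $r$, $a+b$ in the braiding) is all checked correctly.
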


The result below gives a more conceptual proof for \cite[Proposition 1.7]{CG}, in the
particular case when $\mathcal{C}=\mathfrak{M}$.

\begin{corollary}\colabel{SGiso}
We have the following isomorphisms of symmetric monoidal categories%
\begin{equation*}
\mathcal{H}\left( \mathfrak{M}\right) \cong {_{\Bbbk \left\langle
g\right\rangle _{1}^{0,0}}}\mathfrak{M}\cong
{}_{\Bbbk[\langle g\rangle]}\mathfrak{M}\cong
{_{\Bbbk \left\langle g\right\rangle _{1}^{1,-1}}}\mathfrak{M}
\cong \widetilde{\mathcal{H}}%
\left( \mathfrak{M}\right) \text{.}
\end{equation*}
\end{corollary}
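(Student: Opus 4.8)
The plan is to assemble the desired chain of isomorphisms by concatenating results already established in the excerpt, so that essentially no new computation is required. The target statement is
\[
\mathcal{H}\left( \mathfrak{M}\right) \cong {_{\Bbbk \left\langle g\right\rangle _{1}^{0,0}}}\mathfrak{M}\cong {}_{\Bbbk[\langle g\rangle]}\mathfrak{M}\cong {_{\Bbbk \left\langle g\right\rangle _{1}^{1,-1}}}\mathfrak{M}\cong \widetilde{\mathcal{H}}\left( \mathfrak{M}\right),
\]
and each link is either an identification or an isomorphism we have in hand.

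First I would invoke \prref{pro:Hrev}, which records the two \emph{equalities} $\mathcal{H}(\mathfrak{M})=\mathcal{H}_{1}^{0,0}(\mathfrak{M})$ and $\widetilde{\mathcal{H}}(\mathfrak{M})=\mathcal{H}_{1}^{1,-1}(\mathfrak{M})$ of symmetric braided monoidal categories. This rewrites the two outer terms of the chain in the $\mathcal{H}_{q}^{a,b}$ notation, reducing the claim to showing $\mathcal{H}_{1}^{0,0}(\mathfrak{M})\cong {_{\Bbbk\langle g\rangle_{1}^{0,0}}}\mathfrak{M}\cong {}_{\Bbbk[\langle g\rangle]}\mathfrak{M}\cong {_{\Bbbk\langle g\rangle_{1}^{1,-1}}}\mathfrak{M}\cong \mathcal{H}_{1}^{1,-1}(\mathfrak{M})$. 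For the two end isomorphisms I would cite \thref{teo:mainVec}, whose last assertion provides the strict symmetric monoidal isomorphism $(W,w_{0},w_{2}):{_{\Bbbk[\langle g\rangle]_{q}^{a,b}}}\mathfrak{M}\rightarrow \mathcal{H}_{q}^{a,b}(\mathfrak{M})$; specializing to $(q,a,b)=(1,0,0)$ and to $(1,1,-1)$ gives the outermost two links. For the two inner isomorphisms ${_{\Bbbk\langle g\rangle_{1}^{0,0}}}\mathfrak{M}\cong {}_{\Bbbk[\langle g\rangle]}\mathfrak{M}$ and ${}_{\Bbbk[\langle g\rangle]}\mathfrak{M}\cong {_{\Bbbk\langle g\rangle_{1}^{1,-1}}}\mathfrak{M}$ I would appeal to \coref{coro:kgequiv} (equivalently \coref{coro:Hequiv}), which asserts precisely that every such $(q,a,b)$-structure yields a symmetric braided monoidal category isomorphic to the one coming from the trivial triangular structure on the ordinary bialgebra $\Bbbk[\langle g\rangle]$. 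Composing these five isomorphisms, all of which are isomorphisms of \emph{symmetric} braided monoidal categories, produces the asserted chain.

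The one point requiring a word of care, and the only candidate for a genuine obstacle, is bookkeeping rather than mathematics: I must check that the braided structures being matched are literally the same braiding at each junction, so that the isomorphisms compose as symmetric monoidal (not merely monoidal) functors. This is guaranteed because \thref{teo:mainVec} shows $\mathcal{H}_{q}^{a,b}(\mathfrak{M})$ carries a \emph{unique} braided (in fact symmetric) structure, transported by $W$ from the unique triangular structure $R^{a,b}$ on $\Bbbk[\langle g\rangle]_{q}^{a,b}$ furnished by \coref{BraidedZ}; since uniqueness leaves no room for an incompatible choice, each category in the chain has a single symmetric structure and the functors automatically respect it. Thus the proof is the direct composition, and I would simply state the chain of citations in the order above, noting that \coref{SGiso} recovers \cite[Proposition 1.7]{CG} in the case $\mathcal{C}=\mathfrak{M}$ as a byproduct.
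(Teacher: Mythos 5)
Your proof is correct and matches the paper's: the paper's own argument is simply to cite Corollary \ref{coro:Hequiv} (whose proof rests on Theorem \ref{teo:mainVec} and Corollary \ref{coro:kgequiv}) together with Proposition \ref{pro:Hrev}, which is exactly the chain of citations you assemble. Your additional remark about the uniqueness of the symmetric structure guaranteeing compatibility of the braidings is a reasonable elaboration but introduces nothing beyond what the cited results already provide.
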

\begin{proof}
It follows by Corollary \ref{coro:Hequiv}. See also Proposition \ref{pro:Hrev}.
\end{proof}

\noindent \textbf{Acknowledgements.} We would like to thank J. G\'{o}mez-Torrecillas and S. Gelaki for
helpful discussions on the main topics of the paper.

\end{document}